\declaretheoremstyle[
  shaded={bgcolor=gray!15},
]{thmsty}
\declaretheorem[
  name=Theorem,
  refname={Theorem,Theorems},
  style=thmsty,
]{theorem}
\declaretheorem[
  name=Lemma,
  refname={Lemma,Lemmas},
  style=thmsty,
  numberlike=theorem,
]{lemma}
\declaretheorem[
  name=Assumption,
  refname={Assumption,Assumptions},
  style=thmsty,
]{assumption}
\crefname{algorithm}{Algorithm}{Algorithms}
\crefname{line}{Line}{Lines}
\crefname{section}{Section}{Sections}
\crefname{appendix}{Appendix}{Appendices}
\crefname{table}{Table}{Tables}
\crefname{figure}{Figure}{Figures}
\crefname{equation}{}{}
\Crefname{equation}{Eq.}{Eqs.}
\setlist[itemize]{
  topsep=0.4\baselineskip,
  itemsep=0\baselineskip,
  leftmargin=1.5em,
}
\setlist[enumerate]{
  font=\upshape,
  label=(\alph*),
  ref=(\alph*),
  topsep=0.4\baselineskip,
  itemsep=0\baselineskip,
  leftmargin=2em,
}
\newlist{enuminasm}{enumerate}{1} 
\setlist[enuminasm]{
  font=\upshape,
  label=(\alph*),
  ref=\theassumption(\alph*),
  topsep=0.4\baselineskip,
  itemsep=0\baselineskip,
  leftmargin=2em,
}
\newlist{enuminthm}{enumerate}{1}
\setlist[enuminthm]{
  font=\upshape,
  label=(\alph*),
  ref=\thetheorem(\alph*),
  topsep=0.4\baselineskip,
  itemsep=0\baselineskip,
  leftmargin=2em,
}
\newlist{enuminlem}{enumerate}{1}
\setlist[enuminlem]{
  font=\upshape,
  label=(\alph*),
  ref=\thelemma(\alph*),
  topsep=0.4\baselineskip,
  itemsep=0\baselineskip,
  leftmargin=2em,
}
\newlist{enuminprop}{enumerate}{1}
\setlist[enuminprop]{
  font=\upshape,
  label=(\alph*),
  ref=\theproposition(\alph*),
  topsep=0.4\baselineskip,
  itemsep=0\baselineskip,
  leftmargin=2em,
}
\newlist{enumincond}{enumerate}{1}
\setlist[enumincond]{
  font=\upshape,
  label=(\alph*),
  ref=\thecondition(\alph*),
  topsep=0.4\baselineskip,
  itemsep=0\baselineskip,
  leftmargin=2em,
}
\DeclareMathOperator{\Po}{Po}
\DeclareMathOperator*{\tsum}{\textstyle\sum}
\DeclareMathOperator{\EE}{\mathbb{E}}
\DeclarePairedDelimiterX\inner[2]{\langle}{\rangle}{{#1},{#2}}
\DeclarePairedDelimiter\abs{|}{|}
\DeclarePairedDelimiter\prn{(}{)}
\DeclarePairedDelimiter\bra{[}{]}
\DeclarePairedDelimiterX\Set[2]{\{}{\}}{\mspace{2mu}{#1}\;\delimsize|\;{#2}\mspace{2mu}}
\DeclarePairedDelimiterX\Prn[2]{(}{)}{\mspace{2mu}{#1}\;\delimsize|\;{#2}\mspace{2mu}}
\DeclarePairedDelimiterX\Bra[2]{[}{]}{\mspace{2mu}{#1}\;\delimsize|\;{#2}\mspace{2mu}}
\newcommand{\Z}{\mathbb Z}
\newcommand{\R}{\mathbb R}
\newcommand{\bx}{\bm{x}}
\newcommand{\bz}{\bm{z}}
\newcommand{\bH}{\bm{H}}
\newcommand{\bh}{\bm{h}}
\newcommand{\by}{\bm{y}}
\newcommand{\bmxi}{\bm{\xi}}
\newcommand{\mR}{\mathbb{R}}
\newcommand{\bmu}{\bm{\mu}}
\newcommand{\bY}{\bm{Y}}
\newcommand{\hbx}{\hat{\bx}}
\newcommand{\hbz}{\hat{\bz}}
\newcommand{\hx}{\hat{x}}
\newcommand{\E}{\mathbb{E}}
\newcommand{\Int}{\mathrm{Int}}
\renewcommand{\epsilon}{\varepsilon}
\NewDocumentCommand{\exsub}{s m O{} m}{%
  \IfBooleanT{#1}{\EE_{#2}\nolimits\bra*{#4}}%
  \IfBooleanF{#1}{\EE_{#2}\nolimits\bra[#3]{#4}}%
}
\NewDocumentCommand{\ex}{s O{} m}{%
  \IfBooleanT{#1}{\EE\nolimits\bra*{#3}}%
  \IfBooleanF{#1}{\EE\nolimits\bra[#2]{#3}}%
}
\NewDocumentCommand{\cex}{s O{} m m}{%
  \IfBooleanT{#1}{\EE\nolimits\Bra*{#3}{#4}}%
  \IfBooleanF{#1}{\EE\nolimits\Bra[#2]{#3}{#4}}%
}
\newcommand{\email}[1]{\href{mailto:#1}{\nolinkurl{#1}}}
\date{\vspace{-2.5\baselineskip}}
\author[1]{Yuya Hikima\footnote{Corresponding author. E-mail: \url{yuya.hikima@ntt.com}}}
\author[2]{Yasunori Akagi}
\author[2]{Hideaki Kim}
\affil[1]{NTT Communication Science Laboratories, NTT Corporation,
Kyoto, Japan}
\affil[2]{NTT Human Informatics Laboratories, NTT Corporation,
Kanagawa, Japan}
\title{Joint Pricing and Matching for Resource Allocation Platforms via Min-cost Flow Problem}
\begin{document}
\maketitle

\begin{abstract}
Stochastic matching is the stochastic version of the well-known matching problem, which consists in maximizing the rewards of a matching under a set of probability distributions associated with the nodes and edges. In most stochastic matching problems, the probability distributions inherent in the nodes and edges are set a priori and are not controllable. However, many resource allocation platforms can control the probability distributions by changing prices. For example, a rideshare platform can control the distribution of the number of requesters by setting the fare to maximize the reward of a taxi-requester matching. Although several methods for optimizing price have been developed, optimizations in consideration of the matching problem are still in its infancy. In this paper, we tackle the problem of optimizing price in the consideration of the resulting bipartite graph matching, given the effect of the price on the probabilistic uncertainty in the graph. Even though our problem involves hard to evaluate objective values and is non-convex, we construct a $(1-1/e)$-approximation algorithm under the assumption that a convex min-cost flow problem can be solved exactly.
\end{abstract}

\section{Introduction} 
\label{sec: intro}
Bipartite graph matching has received a great deal of attention as a fundamental discrete optimization problem that has wide-ranging applications, such as matching of workers to jobs, residents to hospitals, and jobs to machines in cloud computing \citep{ahuja1988network}. Among the variants of bipartite graph matching, stochastic matching techniques have recently been developed to handle probabilistic uncertainties of instances: they determine the optimum matching under a given probability distribution associated with the nodes and edges. In kidney exchange \citep{chen2009approximating}, for example, the expected number of patient-donor matchings is maximized under a given set of transplantability probabilities between patients and donors. Moreover, in internet advertising \citep{mehta2013online}, advertisements are allocated to website visitors based on probability distributions over types of visitor, which are estimated from past traffic data from websites.

In conventional stochastic matching problems, the probability distributions inherent in the nodes and edges are assumed to be given a priori and so are not controllable. However, many resource allocation platforms can control the probability distributions by changing \textit{price}. For example, a crowd-sourcing servicer can manage worker participation rates for a task by increasing or decreasing the \textit{payment} \citep{horton2010labor}, while a rideshare platform provider can control the distribution of the number of requesters in each area by setting the \textit{fare} \citep{Tong2018DynamicPI}. Since the change in the probability distributions inherent in the graphs affects the resulting optimum matching, finding the price that maximizes the business profit is one of the central concerns for service providers. For example, in the case of a crowd-sourcing platform, the provider can eliminate a surplus of difficult tasks by attracting highly skilled workers with higher wages and allocating the tasks to them. 

In this paper, we focus on the problem of optimizing the price under the consideration of the resulting bipartite graph matching, given the effect of price on the probabilistic uncertainty in the graph. Several studies have already engaged this problem. For example, \cite{Tong2018DynamicPI} proposed a method to optimize prices to maximize the expected weight of the resulting matching. \cite{chen2019dispatching} proposed a contextual bandit algorithm to learn a profitable price setting for each taxi request and optimize the taxi-requester matching by solving a bipartite matching problem. Although these methods have proven successful in pricing, there is room for their growth in two important aspects. First, the tackled problems impose restrictions on the graph structure and price: the method of \citep{Tong2018DynamicPI} can only deal with vertex-weighted graphs, not edge-weighted graphs, which makes it impossible to consider worker-task compatibility in crowd-sourcing and driver-requester distance in ride-sharing; the method of \citep{chen2019dispatching} limits the price that can be set to a predetermined discrete set of alternatives, which causes coarse pricing. Second, their method has no theoretical guarantees regarding the objective value to be achieved, or if it does, the approximation rate deteriorates as the problem size increases. This may degrade the performance of the method especially when the problem size is large.

To overcome these issues, we propose a generalized problem from that of \citep[Section 2.2]{Tong2018DynamicPI}: while \cite{Tong2018DynamicPI} assume vertex-weighted graphs, our problem assumes edge-weighted graphs. This generalization allows us to consider worker-task compatibility in crowd-sourcing and driver-requester distance in ride-sharing. Moreover, in contrast to the work of \citep{chen2019dispatching}, our problem treats price as a continuous variable and does not limit the range of prices to a discrete set of a priori candidates. This leads to more appropriate price setting.

Moreover, we propose a $(1-1/e)$-approximation algorithm under the assumption that a convex minimum-cost (min-cost) flow problem can be solved exactly. Here, we had to tackle two difficulties: (i) the objective function is hard to evaluate because it is the expectation of optimal values of b-matching problems with uncertainty; (ii) it is a non-convex problem. To overcome difficulty (i), we approximated the objective value and reduced our problem to a non-linear cost flow problem. The objective value of the reduced problem could then be easily calculated and the optimal solution would be a $(1-1/e)$-approximation solution for our problem. To deal with difficulty (ii), we showed that the reduced problem is a convex min-cost flow problem under the assumption that the mean of the number of stochastic nodes follows a log-concave function \citep{bagnoli2006log,barlow1963properties}) with respect to price. This assumption is reasonable because the log-concave function category includes many important functions such as the Gauss error function and logistic function, which are often used in the price optimization literature \citep{carvalho2005learning,dong2009dynamic,schulte2020price}. Moreover, convex min-cost flow problems can be solved efficiently with a number of algorithms \citep{ahuja1988network,vegh2016strongly}.

We conducted simulation experiments using real data from ride-sharing and crowd-sourcing platforms. The results show that the proposed algorithm outputs solutions with higher objective values than those of the conventional methods in a practical amount of time.

The contributions of this study are twofold.
\begin{itemize}
\item We formulate a new optimization problem for joint pricing and matching. Our problem is a generalization of \citep{Tong2018DynamicPI} and can handle more practical situations in resource allocation platforms.
\item We propose a fast algorithm that achieves high objective values for our problem. The algorithm achieves a $(1-1/e)$-approximation solution under the assumption that the convex min-cost flow problem can be solved exactly.
\end{itemize}

This paper is an extension of \citep{hikima2021integrated}. The differences are summarized as follows. First, we extend the problem setting so that the probability distributions for stochastic nodes can handle binomial and Poisson distributions in addition to Bernoulli distributions (Section \ref{sec:matching_procedure}), which enables us to set {\it posted prices} (prices before customers appear) while the previous problem setting \citep{hikima2021integrated} allows us to quote prices only after customers have been gathered. This eliminates the overhead time needed to quote prices to customers (Details are in the footnote in Section \ref{sec:matching_procedure}). Second, we extended the node capacity of the problem: each node can have multiple capacities in our setting, whereas it has only one capacity in \citep{hikima2021integrated}. Although \citep{hikima2021integrated} can manage situations where each node has multiple capacities by replicating the same number of nodes as capacities, this enlarges the problem and leads to a long computation time. Besides these extensions, we updated a proof about the bounds of the objective function (Theorem \ref{thm:approximation_ratio} in Section \ref{Aof}). Third, we augmented the theoretical analyses by adding Lemma~\ref{lem:opt_exist} (Section \ref{Aof}). Lemma \ref{lem:opt_exist} shows that the transformed problem with the approximated objective function, (PA) in Section \ref{Aof}, has an optimal solution ensuring that our algorithm always works correctly under the assumptions.

\paragraph{Notation} Bold lowercase symbols (e.g., $\bx, \by$) denote vectors. $\mR_{\ge0}$ and $\Z_{\ge0}$ are the set of positive real numbers and positive integers, respectively. The derivative for a real-valued function $p$ is denoted by $p'$.

\section{Related work} 
\label{sec:related_works}
\subsection{Pricing and Matching in Resource Allocation Platforms}
\label{sec:related_work_joint_pricing_matching}
Many studies have explored matching and pricing methods, which can be divided into three categories: (i) matching methods without optimization of price decisions; (ii) pricing methods without optimization of matching decisions; (iii) joint pricing and matching methods.

The existing studies on (i) \citep{hu2022dynamic,zhao2019preference,tong2016online,ma2013t,zheng2018order} focus on matching decisions under some pricing rule.\footnote{In some cases, they do not mention the pricing rule.} For example, \cite{hu2022dynamic} decide requester-taxi matchings in ride-hailing platforms by ignoring pricing decisions, whereas \cite{zhao2019preference} and \cite{tran2014efficient} decide worker-task matchings without considering task pricing in crowd-sourcing.

The existing studies on (ii) \citep{Xiao2020OptimalCC,mao2013pricing,Besbes2021surge,bimpikis2019spatial,castillo2017surge,guda2019your,banerjee2015pricing,singer2013pricing} focus on pricing decisions under some matching rule. For example, \cite{banerjee2015pricing, Besbes2021surge, bimpikis2019spatial}, and \cite{guda2019your} decide the price for each taxi request in a ride-hailing platform by restricting the matches to being between requesters and taxis in the same area. Moreover, \cite{Xiao2020OptimalCC}, \cite{singer2013pricing}, and \cite{mao2013pricing} perform price optimization and prediction for crowd-sourcing but do not consider worker-task matching.

In regard to (iii), there have been some studies \citep{chen,Tong2018DynamicPI,shah2022joint,ma2022spatio,hikima2022online,Hikima_Akagi_Kim_Asami_2023,feng2023two} on simultaneously deciding pricings and matchings. Several among them \citep{Tong2018DynamicPI,shah2022joint,feng2023two} are similar to ours in terms of the optimization problem they tackle, i.e., two-stage stochastic problems, where the first stage is to optimize the price and the second stage is to optimize the matching. \cite{Tong2018DynamicPI} estimate the acceptance probabilities of requesters for price by using bandit techniques and tackle the two-stage problem. Although their work is similar to ours, ours has two distinct improvements. First, our algorithm outputs a $(1-1/e)$-approximation solution via an optimal solution of a convex min-cost flow problem, while previous methods do not offer constant approximation ratios. Second, our problem is a generalization of that of \citep{Tong2018DynamicPI}: while \cite{Tong2018DynamicPI} assume vertex-weighted graph for matching decisions, we allow edge-weighted graphs, which enables us to incorporate, e.g., worker-task compatibility in crowd-sourcing and driver-requester distance in ride-sharing as objectives. \cite{shah2022joint} proposed a heuristic method for pricing and matching in ride-hailing platforms where each vehicle has multiple capacities for different trips. In contrast to their research, we assume that nodes on one side of the bipartite graph (, which correspond to vehicles) have only unit capacity and our algorithm outputs a $(1-1/e)$-approximation solution via an optimal solution of a convex min-cost flow problem. 
\cite{feng2023two} formulate an optimization problem to maximize drivers' efficiency and requesters' utility,
whereas our objective is to maximize the platform's profit.

Other studies \citep{hikima2022online,Hikima_Akagi_Kim_Asami_2023,chen,ma2022spatio} have proposed joint pricing and matching methods without addressing the two-stage stochastic problem. \cite{hikima2022online,Hikima_Akagi_Kim_Asami_2023} tackled online matching situations and optimization problems for determining pricing and matching policies. In contrast to these studies, we assume batch-type matchings. In applications that tolerate some latency, we can make more profit by optimizing the matching to the accumulated demand \citep{uber2018,zhang2017taxi, feng2023two}. \cite{chen} used a contextual bandit algorithm to learn profitable price settings for each taxi request and optimized the taxi-requester matching by solving bipartite matching problems. Since this method sets the price of each request individually, it cannot take into account the balance between supply and demand, which possibly leads to a decrease in profits.
\cite{ma2022spatio} proposed spatio-temporal pricing mechanism, which achieves good properties for ride-share platforms such as welfare-optimality and envy-freeness.
It assumes that complete information on demand (taxi-requests) is available and thus cannot accommodate demand uncertainty.

\subsection{Optimization Methods for Stochastic Problems with Decision-dependent Uncertainty} 
\label{subsec:related_decision_dependent_noise}
Our problem can be regarded as a stochastic one with \textit{decision-dependent uncertainty} (which is also called \textit{endogenous noise}) \citep{hellemo2018decision,Varaiya1989} within the literature of mathematical optimization. This is because the existence of nodes follows a probability distribution depending on decision variables (price). We cite four methods below and show that they are not suitable for our problem.

\paragraph{Stochastic gradient descent methods \citep{hikima2023stochastic,sutton2018reinforcement}.} Gradient descent methods update the current iterate by using the stochastic gradient, such as $\bx_{k+1}:= \bx_k - \eta_k \bm{g}_k$, where $\bm{g}_k$ is an unbiased stochastic gradient (i.e., $\E[\bm{g}_k]=\nabla \mathbb{E}_{\bmxi \sim D(\bx)}[f(\bx,\bmxi)]|_{\bx=\bx^k}$). A projected stochastic gradient descent \citep{hikima2023stochastic} was proposed for pricing problems with decision-dependent demand and a Monte-Carlo policy-gradient method \citep[Section 13.3]{sutton2018reinforcement} was proposed for policy optimization problems in the literature of reinforcement learning. Although these methods converge to stationary points, they assume that the objective function is differentiable and are not applicable to our problem. Moreover, the approximation ratios of the solutions are not guaranteed.

\paragraph{Zeroth-order methods \citep{spall2005introduction,Flaxman2004online}.} Zeroth-order methods approximate the (sub-) gradient by evaluating objective values at randomly sampled points around the iterate. While this type of method can be used on our problem, it requires a long computation time to evaluate the objective values many times.

\paragraph{Retraining methods \citep{perdomo2020performative,mendler2020stochastic}.} Retraining methods update the current iterate by fixing the distribution, such as by setting $\bx_{k+1}:= \mathrm{proj}_{\mathcal C}(\bx_k - \eta_k \E_{\bmxi \sim D(\bx_k)} [\nabla_{\bx} f(\bx,\bmxi)|_{\bx=\bx_k}])$, where $\mathrm{proj}_{\mathcal C}$ is the Euclidean projection operator onto the feasible region $\mathcal C$. It converges to a \emph{performatively stable point} $\hat{\bx}$ such that $\hat{\bx} = \mathrm{arg} \min_{\bx} \E_{\bmxi \sim D(\hat{\bx})}[f(\bx,\bmxi)]$. However, these methods assume the smoothness and strong convexity of $f$ w.r.t. $\bx$ and are not applicable to our problem.

\paragraph{Bayesian optimization \citep{brochu2010tutorial,frazier2018tutorial}.} Bayesian optimization repeatedly finds better solutions by learning the objective function through Gaussian process regression. While this method is generic, it is not scalable and cannot find good solutions for problems such as ours, which is generally large-scale.

\section{Problem formulation \label{PF}}
\subsection{Pricing and matching procedure} 
\label{sec:matching_procedure}
This section describes the notation and matching procedure, which is illustrated in Figure \ref{over}.

\begin{figure}[t]
\centering
 \includegraphics[scale=0.5]{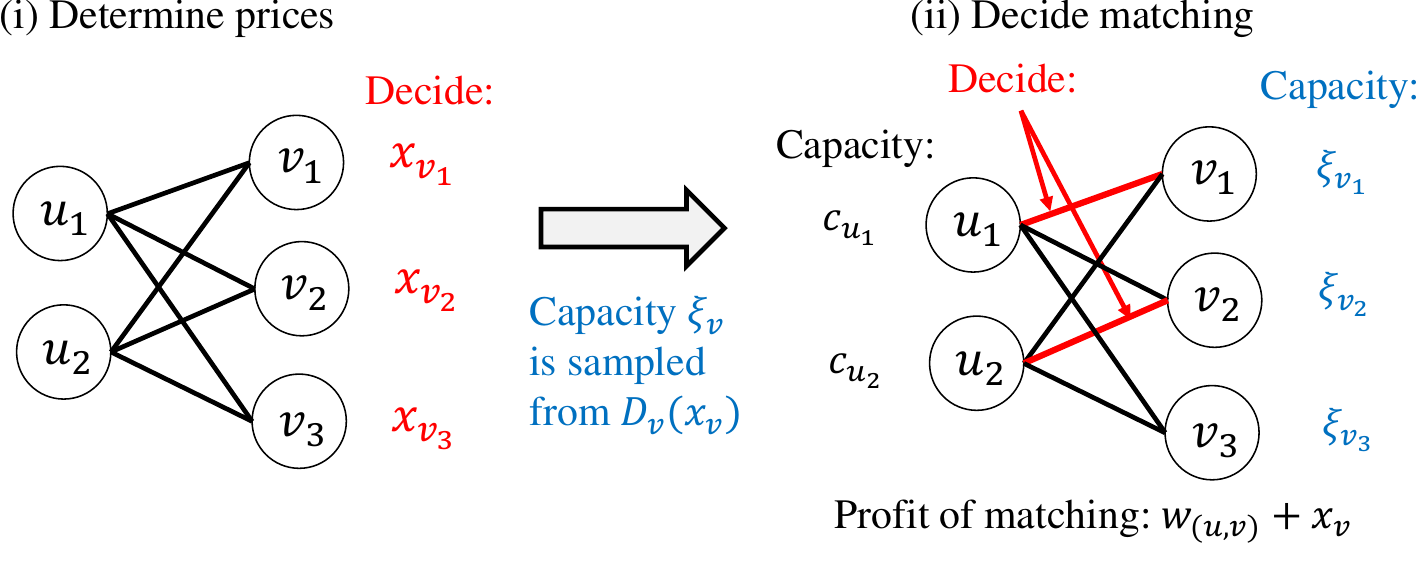}
\caption{Matching procedure}
\label{over}
\end{figure}

\paragraph{Notation.} We mathematically model the resource allocation platform by using a bipartite graph. The set of resource types $U$, the set of participant groups $V$, and a weighted bipartite graph $G=(U, V, E)$ are given; when $(u, v) \in E$ holds, a resource in type $u$ can be matched to a participant in group $v$. Each edge $e \in E$ is associated with a weight $w_e \in \R$. Each node of the bipartite graph has a \emph{capacity}, which is the number of times the node can be matched. Let $c_u \in \Z_{>0}$ denote the capacity of node $u \in U$; this value is given as input. We write $\xi_v \in \Z_{>0}$ for the capacity of the node $v \in V$; this is a random variable determined according to a certain probability distribution, as explained later. Note that $\xi_v$ is \emph{not} given as input. 

\paragraph{System procedure.} We consider the following pricing and matching procedure. First, the platform provider determines the price $x_v$ for each participant group $v \in V$. 
Then, the capacity $\xi_v$ of participant group $v \in V$ is drawn from a given distribution, which is $\mathrm{Bin}(n_v, p_{v}(x_v))$ or $\Po(n_v p_v(x_v))$.\footnote{In our previous study \citep{hikima2021integrated}, $v$ represents each participant (rather than each participant group). In this problem setting, prices cannot be determined until node $v$, i.e., each participant, has been revealed. 
This causes an overhead until a price is quoted to each participant: if a participant requests a taxi ride on the ride-sharing platform, the price cannot be known until information on other participants is gathered. 
In contrast, in the problem setting of this paper, instead of setting a price for each participant, we set a price for each group of participants (before a participant appears). 
This allows us to quote a price as soon as a participant appears, which reduces the overhead time. } 
Here, $\mathrm{Bin}(n_v, p_{v}(x_v))$ is a binomial distribution with parameters $n_v$ and $p_{v}(x_v)$, and $\Po(n_v p_v(x_v))$ is a Poisson distribution with parameter $n_v p_{v}(x_v)$, where $n_v$ is a non-negative integer and $p_v:\mathcal X_v \to \mathcal Z_v \subseteq [0,1]$ is a real-valued function. 
Here, $\mathcal X_v$ is an interval for each $v \in V$. 
After the realization of $\xi_v\ (v \in V)$, the platform provider decides a b-matching\footnote{A \textit{b-matching} of a given graph $G$ is an assignment of integer weights to the edges of $G$ so that the sum of the weights on the edges incident with a node $v$ is at most $b_v$. In our problem setting, $b_v=\xi_v$ for $v \in V$ and $b_u=c_u$ for $u \in U$.} $\Pi$ to maximize the total reward while satisfying resource capacity conditions (the number of selected edges connected to node $u$ is less than or equal to $c_u$) and participant capacity conditions (the number of selected edges connected to node $v$ is less than or equal to 
 $\xi_v$). Accordingly, the total reward is $\sum_{e=(u, v) \in \Pi} (x_v+w_e)$. 

Here, let us use several real-world problems to elucidate the above procedure. 

\paragraph{Ride-hailing platform.} In a ride-hailing platform, the provider needs to decide one-to-one matchings between multiple requesters and multiple taxis in real-time. Here, we can divide the time horizon into multiple time steps and consider that taxi dispatches are to be determined at each time step. There are multiple taxis $U$ with unit capacity ($c_u=1$ for all $u\in U$) and requester groups $V$, where each group is defined by the departure area and the trip distance. Let $w_{(u,v)} (\le 0)$ be the total cost of allocating taxi $u \in U$ to requester $v \in V$, including the cost of gasoline, opportunity costs, and other cost factors.\footnote{ The study by \cite{Tong2018DynamicPI} deals with a case in which $w_{(u,v)}$ is $0$ or $-\infty$ for all $u$ and $v$. Therefore, our problem setting is a generalization of that of \citep{Tong2018DynamicPI}. 
} The provider can determine the price $x_v (\ge 0)$ for each requester group $v \in V$. When $n_v$ requesters appear, they accept price $x_v$ with probability $p_v(x_v)$ for each $v \in V$; that is, the demand $\xi_v \in \{0,1,\dots,n_v\}$ of each requester group $v \in V$ follows a binomial distribution Bin($n_v,p_v(x_v)$).\footnote{
Note that a binomial distribution (including a Bernoulli distribution) was used to represent the demand in \citep{feng2023two, shah2022joint, Tong2018DynamicPI}, whereas a Poisson distribution could also be used.} The provider allocates taxi $u$ to requester $v$ while satisfying both the taxis' and requesters' capacity conditions and gets a profit of $w_{(u,v)}+x_v$.

\paragraph{Crowd-sourcing platform.} \noindent In a crowd-sourcing platform, the provider needs to decide task-to-worker matchings. 
There are task types $U$ with capacity $c_u \in Z_{>0}$ for $u \in U$ and multiple worker types $V$. Let $w_{(u,v)} (\ge 0)$ be the reward paid by the requester of the tasks to the provider when a task $u$ is completed by worker $v$. 
A reward $w_{(u,v)}$ is calculated on the basis of the skills and performance of each worker. The provider can determine a wage $x_v (\le 0)$ for each worker type $v \in V$. Here, $x_v$ is negative because it is the price the provider pays for worker $v$. Then, we assume the capacity $\xi_v \in \{0,1,\dots,\infty \}$ of each worker type $v$ follows a Poisson distribution with intensity $n_v p_v(x_v)$.\footnote{In some studies \citep{huang2021bike,raviv2018crowd}, worker arrivals are assumed to follow a Poisson distribution.} The provider allocates tasks to workers of type $v$ while satisfying the tasks’ and workers’ capacity conditions and gets a profit of $w_{(u,v)}+x_v$.

\subsection{Optimization problem} 
\label{sec:optimization_problem}
For the above procedure, let us consider an optimization problem to maximize the profit of the platform provider.
\begin{align*}
{\rm (P)} \quad \max_{\bx \in \mathbb{R}^{V}}\ \ & \mathbb{E}_{\bmxi \sim D(\bx)}[f(\bx,\bmxi)], \\
{\rm s.t. } \ \ &x_v \in {\mathcal X}_v , \quad \forall v \in V,
\end{align*}
where $f(\bx,\bmxi)$ is the optimal value of the following problem:
\begin{align}
{\rm (P_{sub})}\ \ \ \ \displaystyle{\max_{\bz\in \Z_{\ge 0}^{E}}} \ \ & \sum\nolimits_{e=(u,v) \in E} (w_e+x_{v}) z_e \nonumber \\
{\rm s.t. } \ \ &\displaystyle{ \sum\nolimits_{e \in \delta(v)} } z_e \le \xi_v, \ \ \ \forall v \in V, \nonumber\\
 &\sum\nolimits_{e \in \delta(u)} z_e \le c_u, \ \ \ \forall u \in U, \nonumber
\end{align}
which is a bipartite b-matching problem for given $\bx$ and $\bmxi$. Here, $\delta(a)$ is the non-empty set of edges connected to node $a$.\footnote{We can assume $\delta(a) \neq \emptyset$ for all $a \in U \cup V$ without loss of generality, since removing $\{ a \mid \delta(a) = \emptyset \}$ from $U$ or $V$ does not affect the optimization problem.} The set ${\mathcal X}_v$ is an interval and the domain of $p_v(\bx)$. The distribution $D(\bx)$ is defined by $D(\bx):=\prod_{v \in V} D_{v}(\bx)$, where $D_{v}(\bx)$ is a given distribution. We assume that $D_{v}(\bx)$ is a binomial distribution Bin($n_v,p_v(x_v))$ for each $v \in V$ or a Poisson distribution Po($n_vp_v(x_v))$ for each $v \in V$. Moreover, without loss of generality, we can assume that there exists $x_v \in \mathcal X_v$ such that $x_v > -\max_{e \in \delta(v)} w_e$ for each $v \in V$.\footnote{If such $x_v$ does not exist for some $v \in V$, then $w_{(u,v)} + x_v$ is not positive for any $u \in U$ and $x_v \in \mathcal X_v$. When $w_{(u,v)} + x_v$ is not positive, $z_v^*=0$ for any optimal solution $\bz^*$ of ${\rm (P_\mathrm{sub})}$. Such $v$ can be removed from our problem without affecting the solution.}

\section{Preliminaries}
\subsection{Assumption and Preliminary Lemma}
Throughout the paper, we let 
\begin{align}
\bar{\xi}_v(x):=n_vp_v(x), \label{def:bar_xi}
\end{align}
which is the expected value of $\xi_v$.

Moreover, we make the assumption below. 

\begin{assumption} \label{asp:average_rand}
The following holds for all $v \in V$.
\begin{enumerate}
\item $p_v$ is a differentiable, strictly decreasing, and bijective function.
\item When $\mathcal X_v$ is bounded from above, $p_v(\max {\mathcal X}_v) = 0$. When $\mathcal X_v$ is unbounded from above, $\lim_{x \rightarrow \infty} p_v(x) = 0$.
\item $p_v'(x)/p_v(x)$ is monotonically non-increasing for all $x \in {\Int}(\mathcal X_v)$, where $p_v'(x)$ is the derivative of $p_v(x)$.
\end{enumerate} 
\end{assumption}

Assumption~\ref{asp:average_rand} is not so restrictive; for instance, many studies \citep{carvalho2005learning,dong2009dynamic,schulte2020price} have assumed that the acceptance probability for price has properties (a)--(c). Specifically, functions satisfying property (c) are called \textit{log-concave functions} \citep{bagnoli2006log} or \textit{monotone hazard rate functions} \citep{barlow1963properties}, and they include the cumulative distribution functions of many distributions. For example, $p_v(x) = C F(x)$ satisfies Assumption~\ref{asp:average_rand}, where $C$ is a constant and $F(x)$ is a Gauss error function or a logistic function.

Under Assumption \ref{asp:average_rand}, we can prove the following lemma, which will be useful for our algorithm.

\begin{lemma} \label{lem:inverse_concave}
Suppose that Assumption \ref{asp:average_rand} holds. Then, $\bar{\xi}_v'(x) < 0$ for any $x \in \Int(\mathcal X_v)$ and $v \in V$. Moreover, $-\bar{\xi}_v^{-1}(z) z$ is convex w.r.t. $z$ in the domain $\mathcal Z_v$ for all $v \in V$.
\end{lemma}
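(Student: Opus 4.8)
The plan is to treat the two assertions in turn, with the monotone–hazard–rate condition, Assumption~\ref{asp:average_rand}(c), doing most of the work. Throughout I will abbreviate the hazard ratio by $h(x):=p_v'(x)/p_v(x)$, which is exactly the quantity that Assumption~\ref{asp:average_rand}(c) declares non-increasing.

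For the first claim I would write $\bar\xi_v'(x)=n_v p_v'(x)$ and note that strict monotonicity of $p_v$ (Assumption~\ref{asp:average_rand}(a)) already yields $p_v'(x)\le 0$; the only work is to upgrade this to a strict inequality on $\Int(\mathcal X_v)$. First I would observe $p_v(x)>0$ for every interior $x$: since $p_v$ is strictly decreasing and tends to $0$ at the right end of $\mathcal X_v$ (Assumption~\ref{asp:average_rand}(b)), it stays strictly above its limiting value $0$ on the interior. Then I argue by contradiction: if $p_v'(x_0)=0$ at some interior $x_0$, then $h(x_0)=0$, and because $h$ is non-increasing and $p_v>0$, every $x<x_0$ gives $h(x)\ge 0$ and hence $p_v'(x)\ge 0$; combined with $p_v'(x)\le 0$ this forces $p_v'\equiv 0$ on an interval, contradicting strict monotonicity. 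Thus $p_v'(x)<0$ and $\bar\xi_v'(x)=n_v p_v'(x)<0$ in the non-degenerate case $n_v>0$.

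For the second claim, set $g(z):=-\bar\xi_v^{-1}(z)\,z$. Since $\bar\xi_v'<0$ on the interior by the first part, $\bar\xi_v$ is a strictly decreasing $C^1$ bijection, so the inverse function theorem makes $\bar\xi_v^{-1}$ differentiable and $g$ continuously differentiable; it therefore suffices to show that $g'$ is non-decreasing. I would compute in the variable $x=\bar\xi_v^{-1}(z)$, i.e.\ $z=\bar\xi_v(x)$, using $dx/dz=1/\bar\xi_v'(x)$. Differentiating $g=-xz$ gives $g'(z)=-x-z/\bar\xi_v'(x)=-x-\bar\xi_v(x)/\bar\xi_v'(x)$, and since $\bar\xi_v/\bar\xi_v'=p_v/p_v'=1/h(x)$ this collapses to $g'(z)=-x-1/h(x)=:G(x)$. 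The crucial observation is that the substitution $x=\bar\xi_v^{-1}(z)$ is strictly decreasing, so $g'(z)$ is non-decreasing in $z$ exactly when $G(x)$ is non-increasing in $x$. I then check $G$ is non-increasing term by term: $-x$ is plainly decreasing, while for $-1/h(x)$ I use that $h(x)<0$ on the interior (from $p_v'<0$, $p_v>0$ above) together with $h$ non-increasing; for a negative non-increasing $h$ the reciprocal $1/h$ is non-decreasing, so $-1/h$ is non-increasing. A sum of non-increasing functions is non-increasing, so $G$ decreases, $g'$ increases, and $g$ is convex; convexity at the boundary of $\mathcal Z_v$ follows by continuity.

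I expect the main obstacle to be regularity rather than algebra. Assumption~\ref{asp:average_rand} guarantees only that $p_v$ is once differentiable, so the tempting second-derivative test—which would reduce convexity to the clean inequality $p_v p_v''\le (p_v')^2<2(p_v')^2$ implied by $h'\le 0$—is not licensed without extra smoothness. Routing the proof through monotonicity of $g'$ avoids assuming $p_v\in C^2$, at the price of carefully tracking the sign flip induced by the decreasing change of variables $z=\bar\xi_v(x)$ and confirming $h<0$ on the interior so that $1/h$ is finite and well-behaved; establishing the strict sign $p_v'<0$ in the first part is what makes both of these steps go through.
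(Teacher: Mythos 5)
Your proof is correct and follows essentially the same route as the paper's: the same contradiction argument via the non-increasing hazard rate $p_v'/p_v$ to upgrade $p_v'\le 0$ to $p_v'<0$ on $\Int(\mathcal X_v)$, and the same change of variables $z=\bar\xi_v(x)$ showing that the derivative $-x-\bar\xi_v(x)/\bar\xi_v'(x)$ of $-\bar\xi_v^{-1}(z)z$ is monotone, which (as you note) avoids any unlicensed $C^2$ assumption. The differences are cosmetic: you phrase the contradiction through strict monotonicity rather than bijectivity of $\bar\xi_v$, and you flag the degenerate case $n_v=0$ explicitly, which the paper's proof leaves implicit.
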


\subsection{Min-cost flow problem 
\label{mcrw}}
Here, we briefly describe the min-cost flow problem, since our algorithm derives a $(1-1/e)$-approximation solution from an optimal solution of one. Let $\hat{G}=(\hat{V},\hat{E})$ be a directed graph with a cost function $c_{(i,j)}: \mathbb{R} \rightarrow \mathbb{R}$ and capacity $\ell_{(i,j)} \in \mathbb{R}_{\geq 0}$ associated with each edge $(i,j) \in \hat{E}$. Each node $i \in \hat{V}$ has a value, $b_i \in \mathbb{R}$, which is called the supply of the node when $b_i>0$, or the demand of the node when $b_i<0$. Given the above, the min-cost flow problem can be written as follows:
\begin{align*}
{\rm (MCF)}\ \min_{\bz \in \mR^{\hat{E}}}\quad & \sum_{(i,j) \in \hat{E}} c_{(i,j)}(z_{(i,j)}) \\
{\rm s.t.} \quad \ 
& \sum_{j : (i,j) \in \hat{E}} z_{(i,j)} - \sum_{j : (j,i) \in \hat{E}} z_{(j,i)} = b_i, \ \ \forall i \in \hat{V}, \\
& 0 \le z_{(i,j)} \le \ell_{(i,j)}, \ \ \forall (i,j) \in \hat{E}. 
\end{align*}

Min-cost flow problems are generally NP-hard and difficult to solve when no assumptions are placed on the cost function $c_{(i,j)}$. However, several methods can be used to solve them efficiently when the cost functions are convex \citep{kiraly2012efficient,ahuja1988network}. In Section \ref{capacity}, we introduce one of those methods, i.e., the capacity scaling algorithm, to find an optimal solution of the min-cost flow problem derived from (P).

\section{Proposed algorithm \label{Prom}}
Sections \ref{Aof}--\ref{capacity} describe the approximation algorithm for (P). In particular, Section \ref{Aof} approximates the objective function of (P) and devises an optimization problem (PA) whose optimal solution is a $(1-1/e)$-approximation solution for (P). Section \ref{subsec:FP} shows that (PA) can be reduced to a convex min-cost flow problem (FP) under Assumption~\ref{asp:average_rand}, and Section \ref{capacity} describes the existing methods for solving convex min-cost flow problems.

\subsection{Approximation of objective function} 
\label{Aof}
Here, we seek an approximation of the objective function of (P), that is, $\mathbb{E}_{\bmxi \sim D(\bx)}[f(\bx,\bmxi)]$. First, we introduce the following problem for a given $\bx$:
\begin{align}
{\displaystyle \max_{\bz \in \R_{\ge 0}^E}}&\ \ \ \sum\nolimits_{e=(u,v) \in E} (x_v+w_e) z_e \label{low2} \\
\textrm{s.t.}&\ \ \sum\nolimits_{e \in \delta(v)} z_e \le \bar{\xi}_v(x_v), \ \ \ \forall v \in V, \nonumber\\
 &\ \ \sum\nolimits_{e \in \delta(u)} z_e \le c_u, \ \ \ \forall u \in U, \nonumber
\end{align}
where $\bar{\xi}_v(x_v)$ is defined by \eqref{def:bar_xi}.

Let $\hat{f}(\bx)$ be the optimal value of the above problem. Then, the following theorem holds.\footnote{The current paper is based our previous paper \citep{hikima2021integrated}, in which we prove that the lower bound is $\frac{1}{3} \hat{f}(\bx)$, but the results of a subsequent study \citep{brubach2021improved} updated it to $(1-1/e) \hat{f}(\bx)$.}
\begin{theorem} \label{thm:approximation_ratio}
The following holds for any $\bx$:
\[(1-1/e) \hat{f}(\bx) \le \mathbb{E}_{\bmxi \sim D(\bx)}[f(\bx,\bmxi)] \le \hat{f}(\bx)\vspace{1.5mm}\]
\end{theorem}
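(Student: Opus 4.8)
The plan is to prove the two inequalities separately. The right-hand (upper) bound is routine and follows from Jensen's inequality together with concavity of the linear-program value, whereas the left-hand (lower) bound is the substantive part: I would round an optimal fractional solution of \eqref{low2} into a feasible integral b-matching whose expected reward loses at most a factor $1-1/e$.

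For the upper bound, first relax the integrality in $({\rm P_{sub}})$: for every realization $\bmxi$ the integer optimum $f(\bx,\bmxi)$ is at most the optimal value $g(\bx,\bmxi)$ of the same problem with $\bz\in\R_{\ge0}^E$. Viewing $g(\bx,\cdot)$ as the optimal value of a maximization LP in which $\bmxi$ enters only as the right-hand side of the participant-capacity constraints (the resource capacities $c_u$ being fixed), it is concave in $\bmxi$, since by LP duality it equals the infimum over dual-feasible points of a function that is linear in $\bmxi$. Because $\E[\xi_v]=n_v p_v(x_v)=\bar{\xi}_v(x_v)$ for both the binomial and the Poisson case, Jensen's inequality yields $\E_{\bmxi\sim D(\bx)}[f(\bx,\bmxi)]\le\E_{\bmxi}[g(\bx,\bmxi)]\le g(\bx,\E[\bmxi])=\hat f(\bx)$, where the last equality is exactly the definition of $\hat f(\bx)$ in \eqref{low2}.

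For the lower bound, let $\bz^*$ be an optimal solution of \eqref{low2}, so that $\sum_{e\in\delta(v)}z^*_e\le\bar{\xi}_v(x_v)$, $\sum_{e\in\delta(u)}z^*_e\le c_u$, and $\sum_{e=(u,v)}(x_v+w_e)z^*_e=\hat f(\bx)$; by the reduction noted in the footnote of Section~\ref{sec:optimization_problem} we may assume $x_v+w_e\ge0$ on the support of $\bz^*$. I would use $\bz^*$ as a fractional plan and round it, for each realized $\bmxi$, into an integral b-matching feasible for $({\rm P_{sub}})$. Concretely, interpret each unit of a node's capacity as an independent item: a binomial capacity $\mathrm{Bin}(n_v,p_v(x_v))$ is a sum of $n_v$ independent Bernoulli trials and a Poisson capacity is treated analogously, so $\bz^*$ induces, for each participant copy and each resource slot, a marginal inclusion probability. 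Applying a contention-resolution scheme to these marginals produces a random feasible integral matching in which each edge $e$ is selected with probability at least $(1-1/e)\,z^*_e$; this survival guarantee, which is where the constant $1-1/e$ enters, is exactly the bound supplied by \citep{brubach2021improved}. Since the resulting random integral matching is feasible for $({\rm P_{sub}})$ under $\bmxi$, linearity of expectation together with $x_v+w_e\ge0$ gives
\[
\E_{\bmxi\sim D(\bx)}[f(\bx,\bmxi)]\ \ge\ \sum_{e=(u,v)\in E}(x_v+w_e)\,\Pr[e\text{ selected}]\ \ge\ (1-1/e)\sum_{e=(u,v)\in E}(x_v+w_e)\,z^*_e\ =\ (1-1/e)\,\hat f(\bx).
\]

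The main obstacle is the lower bound, and specifically checking that the contention-resolution machinery applies to our two-sided capacitated b-matching with random participant capacities. I have to (i) verify that both the binomial and Poisson models decompose into independent units so that the relevant marginals are well defined and the fixed resource constraints $\sum_{e\in\delta(u)}z_e\le c_u$ and the random participant constraints $\sum_{e\in\delta(v)}z_e\le\xi_v$ are simultaneously respected by the rounding, and (ii) confirm that the scheme of \citep{brubach2021improved} delivers the per-edge survival probability $(1-1/e)z^*_e$ in this capacitated setting rather than only for ordinary $0/1$ bipartite matchings. The upper bound, by contrast, requires only the observation that the LP value is concave in its right-hand side.
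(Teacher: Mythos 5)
Your upper bound is exactly the paper's argument: relax $(\mathrm{P_{sub}})$ to an LP, observe via duality that the LP value is concave in the right-hand side $\bmxi$, and apply Jensen (the paper additionally notes $g(\bx,\bmxi)=f(\bx,\bmxi)$ by integrality of the bipartite b-matching LP, but your one-sided inequality $f\le g$ suffices). Your lower bound also follows the paper's route for the \emph{binomial} case: replicate each $v$ into $n_v$ unit items and each $u$ into $c_u$ slots, and lean on the $(1-1/e)$ guarantee of \citep{brubach2021improved}. But there is a genuine gap in how you dispose of the Poisson case. A $\Po(n_v p_v(x_v))$ capacity is \emph{not} a finite sum of independent Bernoulli trials, so it cannot be "treated analogously": there is no finite replication into independent unit items, and the contention-resolution/probing machinery has nothing to act on. The paper closes this with a separate limiting argument: approximate each Poisson coordinate by $\mathrm{Bin}(n,\bar{\xi}_v(x_v)/n)$, apply the binomial-case bound for every $n$, and then prove $\lim_{n\to\infty}\E_{\bmxi\sim D^n(\bx)}[f(\bx,\bmxi)]=\E_{\bmxi\sim \Po(\bar{\bmxi}(\bx))}[f(\bx,\bmxi)]$ using a uniform bound $|f(\bx,\bmxi)|\le C$, a coupling lemma for differences of product distributions, and Le Cam's theorem (total-variation distance at most $2\bar{\xi}_v(x_v)^2/n$ per coordinate). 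Without some such limit argument, your proof establishes the theorem only for binomial $D(\bx)$.

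A second, smaller mismatch: what \citep[Theorem~2]{brubach2021improved} supplies—and what the paper's Lemma~4 uses—is the algorithm-level guarantee $\E[\mathrm{ALG}]\ge(1-1/e)L^*$, where $L^*$ is the optimum of their LP, shown to equal $\hat f(\bx)$ under the replication with patience $1$ on the $V$-side and $\infty$ on the $U$-side. It is not literally the per-edge survival bound $\Pr[e\text{ selected}]\ge(1-1/e)z^*_e$ on which your final chain of inequalities rests; you yourself flag this as unverified. The paper avoids per-edge statements entirely: it couples the probing successes $\bmu$ with the realization $\bmxi=\hat{\bmu}\sim D(\bx)$, observes that the algorithm's aggregated matching is feasible for $(\mathrm{P_{sub}})$ under that same realization, hence $\E_{\bmxi\sim D(\bx)}[f(\bx,\bmxi)]\ge\E[\mathrm{ALG}]\ge(1-1/e)\hat f(\bx)$. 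You should either verify $(1-1/e)$-selectability in this capacitated two-sided setting or restructure your conclusion around the total-value guarantee as the paper does. (Also, $x_v+w_e\ge 0$ on the support of $\bz^*$ follows from optimality of $\bz^*$ in \eqref{low2}—zeroing out negative-coefficient edges preserves feasibility and the objective—not from the footnote in Section~\ref{sec:optimization_problem}, which only asserts existence of \emph{some} profitable price in $\mathcal X_v$, not a property of the given $\bx$.)
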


We obtain the optimization problem below by replacing $\mathbb{E}_{\bmxi \sim D(\bx)}[f(\bx,\bmxi)]$ with $\hat{f}(\bx)$ for (P):
\begin{align*}
{\rm (PA)}\ \ \ {\displaystyle \max_{\bx \in \R^V,\ \bz\in \R_{\ge 0}^E}}&\ \ \ \sum\nolimits_{e=(u,v) \in E} (x_v+w_e) z_e \\
\textrm{s.t.} \quad & \quad x_v \in \mathcal X_v, \ \ \ \forall v \in V, \\
&\ \ \sum\nolimits_{e \in \delta(v)} z_e \le \bar{\xi}_v(x_v), \ \ \ \forall v \in V, \nonumber\\
 &\ \ \sum\nolimits_{e \in \delta(u)} z_e \le c_u, \ \ \ \forall u \in U. \nonumber
\end{align*}
Here, the following lemma holds.
\begin{lemma} \label{lem:opt_exist}
Under Assumption~1, (PA) has an optimal solution. 
\end{lemma}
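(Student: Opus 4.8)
The obstacle is that the feasible set of (PA) need not be compact: the price domains $\mathcal X_v$ may be unbounded (above or below), and the objective $\sum_{e=(u,v)}(x_v+w_e)z_e$ is bilinear, so the Weierstrass theorem does not apply directly. The plan is to eliminate the price variables by partial maximization, turning (PA) into the maximization of a concave, upper semicontinuous function over a compact polytope in the flow variables $\bz$ alone, to which Weierstrass does apply.

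First I would carry out the elimination. Writing $s_v:=\sum_{e\in\delta(v)}z_e$, the objective splits as $\sum_{v}x_v s_v+\sum_e w_e z_e$, and the only constraint coupling $x_v$ to $\bz$ is $s_v\le\bar\xi_v(x_v)$. Since $\bar\xi_v$ is strictly decreasing (\cref{lem:inverse_concave}) this reads $x_v\le\bar\xi_v^{-1}(s_v)$, and because $s_v\ge 0$ the inner maximum over $x_v$ is attained at $x_v=\bar\xi_v^{-1}(s_v)$ when $s_v>0$ and equals $0$ (for any admissible $x_v$) when $s_v=0$. Hence (PA) has the same value as $\max_{\bz}\ \sum_v\phi_v(s_v)+\sum_e w_e z_e$ over $\{\bz\ge 0:\sum_{e\in\delta(u)}z_e\le c_u,\ s_v\le S_v\}$, where $\phi_v(s):=\bar\xi_v^{-1}(s)\,s$ and $S_v:=n_v\sup_{x\in\mathcal X_v}p_v(x)$. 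Two facts make this tractable: $S_v\le n_v<\infty$ because $p_v\le 1$, so the polytope is bounded and hence compact; and $-\phi_v$ is convex by \cref{lem:inverse_concave}, so $\phi_v$ is concave and the reduced objective is concave in $\bz$.

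Next I would verify that the reduced objective extends to an upper semicontinuous function on the compact feasible set. The only delicate boundary is $s_v\to 0^+$ when $\mathcal X_v$ is unbounded above, where $\bar\xi_v^{-1}(s_v)\to+\infty$; there I must show $\phi_v(s)\to 0$, i.e. $\lim_{x\to\infty}x\,n_v p_v(x)=0$. This is precisely where Assumption~\ref{asp:average_rand}(c) enters: log-concavity means $h(x):=p_v'(x)/p_v(x)$ is nonincreasing, and since $h<0$, fixing any $x_0$ gives $h(x)\le h(x_0)<0$ for $x\ge x_0$, whence $p_v(x)\le p_v(x_0)e^{h(x_0)(x-x_0)}$ decays at least exponentially and dominates the linear factor $x$. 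Setting $\phi_v(0):=0$ thus makes $\phi_v$ continuous at $0$; at the other endpoint $s=S_v$ (relevant only when $\inf\mathcal X_v=-\infty$) the concave $\phi_v$ tends to $-\infty$, which is harmless for a maximization. A concave function extended by its boundary limits is upper semicontinuous, so Weierstrass yields an optimal $\bz^\star$.

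Finally I would recover an optimal solution of (PA): set $x_v^\star:=\bar\xi_v^{-1}(s_v^\star)\in\mathcal X_v$ when $s_v^\star>0$, and let $x_v^\star$ be any point of $\mathcal X_v$ when $s_v^\star=0$; then $(\bx^\star,\bz^\star)$ is feasible for (PA) and attains the common optimal value. (Note $s_v^\star<S_v$ whenever $\phi_v(S_v)=-\infty$, since the all-zero flow already achieves objective value $\ge 0$, so $\bar\xi_v^{-1}(s_v^\star)$ is well defined.) I expect the main obstacle to be the boundary analysis of the unbounded case—establishing $x\,p_v(x)\to 0$ from log-concavity and packaging the endpoint behavior into upper semicontinuity—rather than the reduction itself, which is routine once the partial maximization over $\bx$ is performed.
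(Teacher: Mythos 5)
Your proof is correct in substance, but it takes a genuinely different route from the paper. The paper argues \emph{in the price space}: it first shows (via log-concavity) that there is a threshold $M$ beyond which raising any price strictly hurts, then gives an explicit improvement procedure (Algorithm~\ref{alg:improve solution}) that maps any feasible solution of (PA) to one at least as good with every $x_v$ confined to $[-L,M]$ (where $L=\max_{e}w_e$), and finally invokes the extreme-value theorem on the resulting compact restriction. You instead argue \emph{in the flow space}: you project out $\bx$ by partial maximization, exploit the concavity of $s\mapsto \bar{\xi}_v^{-1}(s)s$ from \cref{lem:inverse_concave}, extend the value function upper semicontinuously to the compact polytope, and apply Weierstrass for u.s.c.\ functions. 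The two uses of Assumption~\ref{asp:average_rand}(c) are cousins but not identical: the paper uses it to show $(x+c)\bar{\xi}_v(x)$ is eventually decreasing, while you use it to get the exponential-decay bound $p_v(x)\le p_v(x_0)e^{h(x_0)(x-x_0)}$ and hence $x\,p_v(x)\to 0$, which is the stronger (and correct) statement needed for continuity of $\phi_v$ at $s=0$. What your route buys: it is shorter, it reuses \cref{lem:inverse_concave} rather than requiring two auxiliary lemmas and a case-heavy verification of the improvement algorithm, and it simultaneously establishes existence for (PA$'$) and the recovery $x_v^\star=\bar{\xi}_v^{-1}(s_v^\star)$, effectively merging \cref{lem:opt_exist} with the content of the paper's subsequent reduction theorem. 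What the paper's route buys: it stays entirely within elementary compactness (no extended-real-valued u.s.c.\ machinery), and it yields explicit a priori bounds $-L\le x_v^\star\le M$ on optimal prices, which is useful information in its own right.

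One caveat, which you share with the paper rather than fall behind it: both arguments implicitly assume that the lower end of each $\mathcal X_v$ is either $-\infty$ or attained. If some $\mathcal X_v$ has a finite infimum not contained in $\mathcal X_v$, your recovery step breaks at $s_v^\star=S_v$ (your parenthetical only covers $\phi_v(S_v)=-\infty$), the paper's ``bounded closed feasible region'' claim breaks, and in fact the lemma itself fails (e.g., $\mathcal X_v=(1,\infty)$, $p_v(x)=e^{-(x-1)}$, $w_e=0$, $n_v=c_u=1$ gives supremum $1$ never attained). So this is a defect of the statement's implicit hypotheses, not a gap specific to your argument.
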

From Theorem~\ref{thm:approximation_ratio}, the solution of (PA) is a $(1-1/e)$-approximation solution for (P).

\subsection{Reduce (PA) to a convex min-cost flow problem}
\label{subsec:FP}
Although (PA) is a non-convex problem with non-convex functions $\bar{\xi}_v(x_v)$, we can reduce (PA) to a convex min-cost flow problem under \mbox{Assumption~\ref{asp:average_rand}}. First, let us consider 
\begin{align*}
{\rm (PA')}\ \ &{\displaystyle \max_{\bz\in \R_{\ge 0}^E}}\ \ \sum_{v \in V} \bar{\xi}_v^{-1} \left( \sum_{e \in \delta(v)} z_e \right) \sum_{e \in \delta(v)} z_e + \sum_{e \in E} w_e z_e \\
&{\rm s.t.}
\quad \sum_{e \in \delta(v)} z_e \in \mathcal Z_v, \ \ \forall v \in V, \\ 
 &\quad \quad \sum_{e \in \delta(u)} z_e \le c_u, \ \ \forall u \in U.
\end{align*}
This optimization problem eliminates the decision variable $\bx$ in (PA) by making the substitution $x_v:=\bar{\xi}_v^{-1}(\sum\nolimits_{e \in \delta(v)} z_e )$.

We can prove the following theorem regarding the solution of this problem.

\begin{theorem} \label{thm:pa_p_solution}
Suppose that Assumption~\ref{asp:average_rand} holds. Let the optimal solution of ${\rm (PA')}$ be $\bz^*$ and $x^*_v:=\bar{\xi}_v^{-1}(\sum\nolimits_{e \in \delta (v)} z_e^* )$ for all $v \in V$. Then, $(\bx^*,\bz^*)$ is an optimal solution for ${\rm (PA)}$.
\end{theorem}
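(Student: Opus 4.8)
The plan is to prove the equivalence of (PA) and (PA') by a variable-separation argument: I would regard (PA) as the nested maximization $\max_{\bz}\max_{\bx}$ and show that, for each fixed $\bz$, the inner maximization over $\bx$ is solved in closed form by the substitution $x_v=\bar{\xi}_v^{-1}(\sum_{e\in\delta(v)}z_e)$, turning the outer problem into exactly (PA'). Writing $s_v:=\sum_{e\in\delta(v)}z_e\ge 0$, the objective of (PA) decomposes as $\sum_{v\in V}x_v s_v+\sum_{e\in E}w_e z_e$; only the first sum involves $\bx$, and it separates across $v$, which is what makes the closed-form inner solution possible.

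First I would establish feasibility and the matching objective value for the constructed point. By feasibility of $\bz^*$ in (PA') we have $s^*_v=\sum_{e\in\delta(v)}z^*_e\in\mathcal Z_v$, so $x^*_v=\bar{\xi}_v^{-1}(s^*_v)$ is well defined and lies in $\mathcal X_v$; moreover $\bar{\xi}_v(x^*_v)=s^*_v$, so the participant-capacity constraint holds with equality while the resource-capacity constraints and nonnegativity are inherited verbatim. Substituting $x^*_v=\bar{\xi}_v^{-1}(s^*_v)$ then shows that the (PA) objective at $(\bx^*,\bz^*)$ equals the (PA') objective at $\bz^*$, i.e. the optimal value of (PA').

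The core step is to show that every feasible $(\bx,\bz)$ of (PA) has objective at most the optimal value of (PA'). For each $v$ with $s_v>0$, feasibility gives $\bar{\xi}_v(x_v)\ge s_v$; since $\bar{\xi}_v$ is strictly decreasing (Lemma~\ref{lem:inverse_concave}), its inverse is decreasing, whence $x_v\le\bar{\xi}_v^{-1}(s_v)$ and therefore $x_v s_v\le\bar{\xi}_v^{-1}(s_v)s_v$. For $v$ with $s_v=0$ the term $x_v s_v$ vanishes and every edge in $\delta(v)$ carries zero flow. Summing, the (PA) objective is bounded by $\sum_{v:s_v>0}\bar{\xi}_v^{-1}(s_v)s_v+\sum_{e\in E}w_e z_e$, which is precisely the (PA') objective evaluated at $\bz$. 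When $0\in\mathcal Z_v$ for every $v$ (the case where each $\mathcal X_v$ is bounded above, so $\bar{\xi}_v(\max\mathcal X_v)=0$ by Assumption~\ref{asp:average_rand}(b)), $\bz$ is itself feasible for (PA') and this bound is at most the optimal value of (PA'), which together with the previous paragraph completes the argument.

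The main obstacle is the remaining case in which some $\mathcal X_v$ is unbounded above, so that $\bar{\xi}_v(x)\to 0$ without attaining $0$ and $0\notin\mathcal Z_v$; then a feasible (PA) point with $s_v=0$ need not be feasible for (PA'), since $\bar{\xi}_v^{-1}(0)$ is undefined. To close this gap I would again invoke Lemma~\ref{lem:inverse_concave}: convexity of $-\bar{\xi}_v^{-1}(z)z$ on $\mathcal Z_v$ forces the one-sided limit $L_v:=\lim_{z\to 0^+}\bar{\xi}_v^{-1}(z)z$ to exist, and because $\bar{\xi}_v^{-1}(z)>0$ and $z>0$ near $0$ we obtain $L_v\ge 0$. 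Since such a $v$ contributes exactly $0$ to the (PA) objective while its limiting contribution in (PA') is $L_v\ge 0$, a continuity argument approximating $s_v=0$ by $s_v\to 0^+$ shows that no (PA) point can exceed the supremum approachable in (PA'); the delicate part is to perform this approximation while respecting the resource-capacity constraints, which I would handle by routing an infinitesimal flow on a single incident edge and passing to the limit. Combining both cases gives that the optimal value of (PA) is at most that of (PA'), and together with the feasibility and value-matching of $(\bx^*,\bz^*)$ established above, this proves that $(\bx^*,\bz^*)$ is optimal for (PA).
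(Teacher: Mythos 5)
Your argument is sound in strategy and takes a genuinely different route from the paper's. The paper begins with an optimal solution $(\hat{\bx},\hat{\bz})$ of (PA) --- whose existence must be imported from Lemma~\ref{lem:opt_exist} --- proves that at such a solution every load $\sum_{e\in\delta(v)}\hat z_e$ lies in the range of $\bar{\xi}_v$ (the troublesome case $\sum_{e\in\delta(v)}\hat z_e=0$ with $0\notin\mathcal Z_v$ is eliminated by an exchange argument contradicting optimality: either push $\epsilon$ of flow through a resource node with slack, or reroute $\epsilon$ away from a competing edge at a sufficiently high price), then shows the participant constraints may be assumed tight, and finally performs the substitution. You instead prove optimality of $(\bx^*,\bz^*)$ by a direct sandwich: it is (PA)-feasible with objective equal to the optimal value of ${\rm (PA')}$, and, writing $s_v=\sum_{e\in\delta(v)}z_e$, every (PA)-feasible point has objective at most that value via the monotonicity bound $x_v\le\bar{\xi}_v^{-1}(s_v)$, with the degenerate nodes handled by perturbation and a limit. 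What your route buys is independence from Lemma~\ref{lem:opt_exist}: attainment of (PA)'s optimum falls out as a corollary rather than being a prerequisite. What the paper's route buys is that the degenerate case is dispatched by a single finite contradiction rather than a limiting construction; note that your ``infinitesimal rerouting'' is exactly the paper's case (ii-b) move, used for approximation instead of refutation.

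Two gaps need closing before this is a complete proof. First, for $s_v>0$ you apply $\bar{\xi}_v^{-1}$ to $s_v$ without verifying that $s_v$ lies in the range of $\bar{\xi}_v$; this requires the connectedness argument the paper gives in its case (i): the range is an interval (continuous image of the interval $\mathcal X_v$), it contains $\bar{\xi}_v(x_v)\ge s_v$, and by Assumption~\ref{asp:average_rand}(b) it contains points below $s_v$, hence it contains $s_v$. Second, the limiting argument is only sketched, and it is the real content of the degenerate case: you must check that $\bar{\xi}_v^{-1}(z)z$ is continuous on its domain (true, since $\bar{\xi}_v^{-1}$ is the inverse of a continuous strictly monotone function on an interval), that the rerouting decreases only edges carrying strictly positive flow so that feasibility of the perturbed $\bz^\epsilon$ in ${\rm (PA')}$ survives for all small $\epsilon$ (including when several degenerate nodes share the same saturated resource node), and that the perturbed objective is at least your upper bound minus $O(\epsilon)$ plus the terms $\bar{\xi}_v^{-1}(\epsilon)\epsilon$, which are positive for small $\epsilon$ because $\bar{\xi}_v^{-1}(\epsilon)\to\infty$. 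This last observation in fact makes the existence of your limit $L_v$ unnecessary, so the appeal to convexity of $-\bar{\xi}_v^{-1}(z)z$ can be dropped altogether. With these details written out, your proof is correct.
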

From Theorem~\ref{thm:pa_p_solution}, we can solve (PA) by solving ${\rm (PA')}$.

Next, we reduce ${\rm (PA')}$ to a min-cost flow problem. We prepare slack variables $z_{(s,u)}$ and $z_{(v,t)}$ with new subscripts $s$ and $t$. Here, we let $z_{(s,u)}:=\sum_{e \in \delta(u)} z_e$ for all $u \in U$ and $z_{(v,t)}:=\sum_{e \in \delta(v)} z_e$ for all $v \in V$. In addition, we let $z_{(s,t)}$ be a slack variable and $C:=\min \{\sum_{v \in V} \sup \mathcal Z_v,\ \sum_{u \in U} c_u \}$. Accordingly, ${\rm (PA')}$ can be written as 
\begin{align}
{\rm (FP)}\ \ \ \min_{\substack{
\bz \in \mR^E,\ \bz^s \in \mR^U, \\
\bz^t \in \mR^V,\ z_{(s,t)}\in \mR}}\ \ &\sum_{v \in V} -\bar{\xi}_v^{-1} (z_{(v,t)}) z_{(v,t)} - \sum_{e \in E} w_e z_e \label{FP1} \\
{\rm s.t. } \ \ 
& \sum_{u\in U} z_{(s,u)} + z_{(s,t)} = C,\ \sum_{v \in V} z_{(v,t)} +z_{(s,t)} = C, \label{FP2}\\
& z_{(s,u)}-\sum_{e \in \delta(u)} z_e = 0, \ \ \ \forall u \in U, \label{FP3}\\
& \sum_{e \in \delta(v)} z_e - z_{(v,t)} = 0, \ \ \ \forall v \in V, \label{FP4}\\
& \sum_{e \in \delta(v)} z_e \in \mathcal Z_v, \ \ \forall v \in V,\label{FP5-1}\\ 
& 0 \le z_e , \ \ \ \forall e \in E, \label{FP7} \\
&0 \le z_{(s,u)} \le c_u, \ \ \forall u \in U, \label{FP7-2} \\
& 0 \le z_{(s,t)} \le C, \label{FP8}
\end{align}
where $\bz^s\coloneqq \{z_{(s,u)}\}_{u \in U}$ and $\bz^t \coloneqq \{z_{(v,t)}\}_{v \in V}$. This is a nonlinear min-cost flow problem for a graph with $U \cup V \cup \{ s, t\} $ as nodes.

Now let us explain (FP) in the context of the min-cost flow problem (See Fig. \ref{flow}). First, \eqref{FP1} represents the cost function for the flow amount of each edge. $ -\bar{\xi}_v^{-1} (z_{(v,t)}) z_{(v,t)}$ is the cost function for edge $(v,t)$ and $-w_e z_e$ is the cost function for edge $e \in E$. The cost for $z_{(s,u)}$ and $z_{(s,t)}$ is $0$ because these variables are not in the objective function. Second, \eqref{FP2}, \eqref{FP3}, and \eqref{FP4} represent the demand/supply for each node. From these equations, the supply at node $s$ is $C$, the demand at node $t$ is $-C$, and the demands/supplies at other nodes are $0$. Therefore, the problem is to find a way of sending a flow amount equal to $C$ from node $s$ to node $t$ through the network. Third, \eqref{FP5-1}, \eqref{FP7}, \eqref{FP7-2}, and \eqref{FP8} represent the flow capacity of each edge. As shown in \mbox{Fig. \ref{flow}}, we can view (FP) as a min-cost flow problem.

\begin{figure}[t]
\centering
 \includegraphics[scale=0.5]{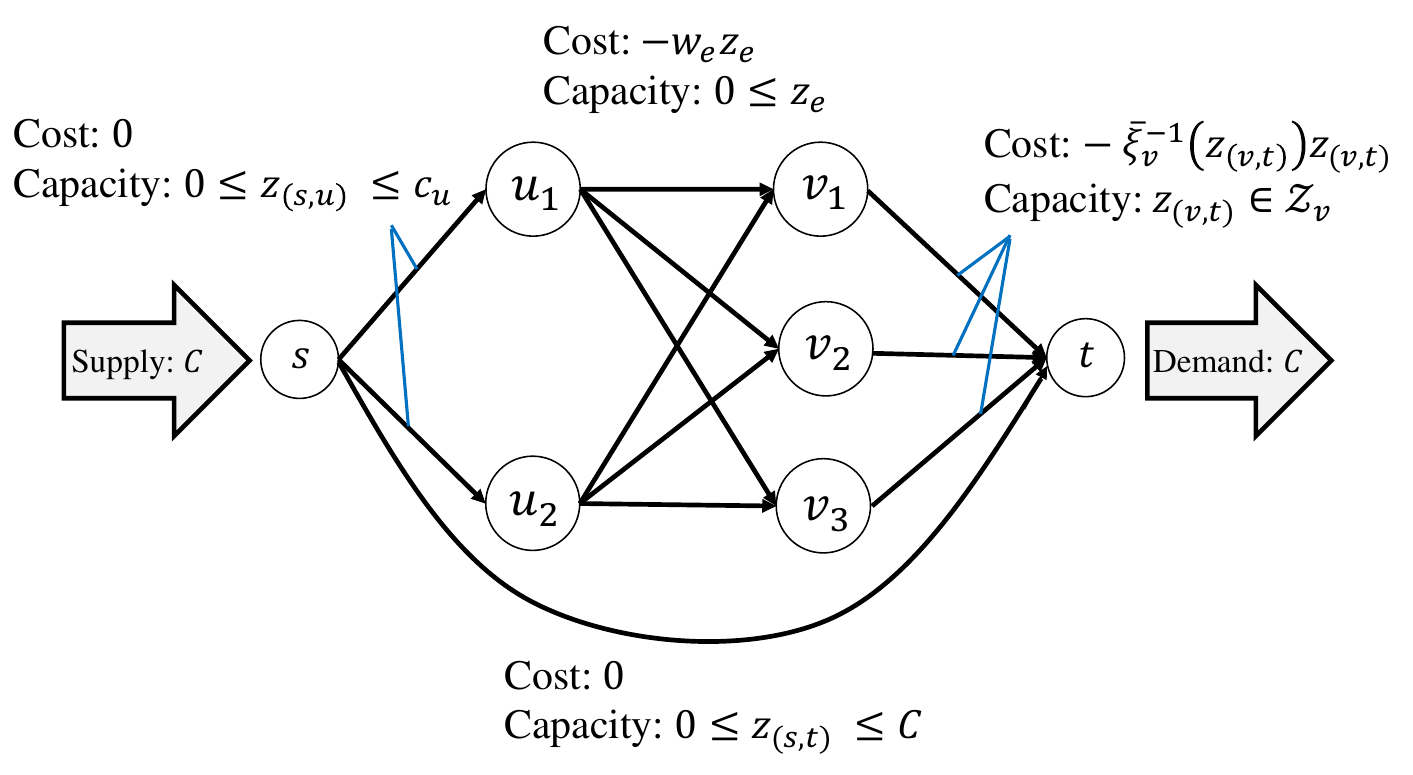}
\caption{Illustration of (FP)}
\label{flow}
\end{figure}

The theorem below follows immediately from Lemma \ref{lem:inverse_concave}.
\begin{theorem} \label{thm:fp_convex}
When Assumption~\ref{asp:average_rand} holds, the cost function for all edges in the min-cost flow problem (FP) is convex. Consequently, (FP) is a convex min-cost flow problem.
\end{theorem}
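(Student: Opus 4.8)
The plan is to verify convexity edge by edge, since the objective \eqref{FP1} of (FP) is a sum of per-edge cost functions while the feasible region carved out by \eqref{FP2}--\eqref{FP8} is polyhedral (every constraint is a linear equality or a box constraint). Once each summand is shown to be convex, the objective is convex as a sum of convex functions, and (FP) is by definition a convex min-cost flow problem. So the whole argument reduces to classifying the edge costs.

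First I would dispatch the trivial edges. The edges $(s,u)$ for $u \in U$ and the edge $(s,t)$ carry no cost, so their cost functions are identically $0$ and hence convex. For each $e \in E$, the cost $-w_e z_e$ is affine in $z_e$ because $w_e$ is a fixed real constant, and affine functions are convex. This leaves only the edges $(v,t)$ for $v \in V$.

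For the edges $(v,t)$, whose cost is $-\bar{\xi}_v^{-1}(z_{(v,t)})\, z_{(v,t)}$, I would invoke Lemma~\ref{lem:inverse_concave} directly: under Assumption~\ref{asp:average_rand}, the map $z \mapsto -\bar{\xi}_v^{-1}(z)\, z$ is convex on $\mathcal Z_v$. The one point requiring care is the domain. Constraints \eqref{FP4} and \eqref{FP5-1} together force $z_{(v,t)} = \sum_{e \in \delta(v)} z_e \in \mathcal Z_v$, so the flow on edge $(v,t)$ always lies in exactly the region where Lemma~\ref{lem:inverse_concave} guarantees convexity. This finishes the per-edge check, and summing the three convex cases yields a convex objective over a polyhedron.

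There is essentially no obstacle beyond this bookkeeping, which is why the theorem is stated to follow immediately: the substantive work—deriving convexity of $-\bar{\xi}_v^{-1}(z)\, z$ from the log-concavity condition Assumption~\ref{asp:average_rand}(c) on $p_v$—has already been carried out in Lemma~\ref{lem:inverse_concave}. The only mild subtlety worth flagging explicitly in the write-up is the domain-matching step above, ensuring that the flow variable $z_{(v,t)}$ is constrained to $\mathcal Z_v$ so that the lemma applies verbatim rather than on some larger set where convexity might fail.
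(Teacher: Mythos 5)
Your proof is correct and matches the paper's approach: the paper simply states that the theorem follows immediately from Lemma~\ref{lem:inverse_concave}, and your per-edge case analysis (zero costs, affine costs $-w_e z_e$, and the convexity of $-\bar{\xi}_v^{-1}(z_{(v,t)})\,z_{(v,t)}$ on $\mathcal Z_v$ from the lemma) is exactly the bookkeeping that remark leaves implicit. Your domain-matching observation via \eqref{FP4} and \eqref{FP5-1} is a sensible extra precaution, not a deviation from the paper's argument.
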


\subsection{Solution for (FP) via capacity scaling algorithm} 
\label{capacity}
From Theorem~\ref{thm:fp_convex}, we can use the capacity scaling algorithm \citep{ahuja1988network,vegh2016strongly} to solve (FP), as described in Section \ref{mcrw}. The capacity scaling algorithm, as explained in \citep{ahuja1988network}, solves convex min-cost flow problems that restrict the feasible values of $\bz$ to integers. We can use this method to obtain an optimal solution in a continuous domain to any desired degree of accuracy by using the following procedure: (i) Substitute $\Delta y_e$ for each $z_e$ for each edge $e$, where $\Delta \in \R_{\ge 0}$ is an allowable error and $y_e \in \mathbb{Z}$; (ii) find an integer optimal solution $\by^*$ of the transformed problem; (iii) let $z_e^{\dagger}:=\Delta y_e^*$. Then, $\bz^{\dagger}$ is an optimal solution of the original problem with a degree of accuracy \mbox{of $\Delta$}, that is, $|z_e^{\dagger}-z_e^*| \le \Delta$ for all $e \in E$, where $\bz^*$ is some optimal solution for (FP). Moreover, the capacity scaling algorithm in \citep{vegh2016strongly} can find an optimal solution in strongly polynomial time when the cost functions are quadratic. We can use the method of \citep{vegh2016strongly} on our problem when $p_v$ is a linear function (such as $p_v^{L}$ in Section \ref{subsec:para_setup}).

Now we come to our proposal, Algorithm \ref{alg:FP_to_solution}. From Theorem~\ref{thm:approximation_ratio}, Theorem~\ref{thm:pa_p_solution}, and the fact that (PA${\rm'}$) is equivalent to (FP), it can be shown that Algorithm \ref{alg:FP_to_solution} outputs a $(1-1/e)$-approximation solution for (P).

\begin{algorithm}[t]
 \caption{Proposed algorithm}
 \label{alg:FP_to_solution}
 \begin{algorithmic}[1]
 \Require{Constants $w_{(u,v)}, c_u$, and function $\bar{\xi}_v$ for all $u \in U$ and $v \in V$}
 \State{Solve (FP) and let $\bz^*$ be an optimal solution} \label{state:FP_solve}
 \State{Let $x^*_v:=\bar{\xi}_v^{-1}(\sum\nolimits_{e \in \delta (v)} z^*_e)$ for all $v \in V$} \label{state:z_to_x_convert}
 \State{\textbf{Return} $\bx^*$}
 \end{algorithmic}
 \end{algorithm}

\section{Experiments} \label{exp}
We conducted experiments to show that:
{
\setlength{\leftmargini}{10pt}
\begin{itemize}
	\setlength{\itemsep}{1pt}
	\setlength{\parskip}{0pt}
	\setlength{\itemindent}{0pt}
	\setlength{\labelsep}{3pt}
\item The proposed algorithm outputs a more profitable solution compared with the other methods in each application.
\item The proposed algorithm outputs a solution in a practical amount of time.
\end{itemize}}

\noindent We performed simulation experiments using real data from a ride-hailing platform and a crowd-sourcing platform. The experiments were run on a computer equipped with a Xeon Platinum 8168 of 4 x 2.7GHz, 1TB of memory, and CentOS 7.6. The program codes were implemented in Python.

\subsection{Ride-hailing platform \label{Rpp}}
We conducted experiments on a ride-haling platform whose matching procedure is described in Section \ref{sec:matching_procedure}.

\subsubsection{Data sets and parameter setup} 
\label{subsec:para_setup}
We used ride data gathered in New York City\footnote{https://www1.nyc.gov/site/tlc/about/tlc-trip-record-data.page}. We used yellow taxi data and green taxi data from Manhattan, Queens, Bronx, and Brooklyn. Each record consists of pick-up area, pick-up time, drop-off area, drop-off time, trip distance, and total amount charged to passengers. We performed simulations using data from a holiday and weekday in a randomly chosen week: October 6 and 10, 2019. For each day, we constructed requester-taxi matching situations  every 5 minutes from 10:00 to 20:00, that is, we simulated $120$ ($10$ hours $\times$ $12$ times) situations. We chose a time step, $t_s$, which is required to generate the situation, of 30 seconds for Manhattan and 300 seconds for regions other than Manhattan because of the differences in the amount of data in each region. The features of the dataset that we used are summarized in Table \ref{dataset}.

\begin{table}[t]
\begin{center}
\small
 \begin{tabular}{ccrrrr} \hline
 Region \& & date & \multicolumn{2}{c}{$|U|$ (taxis)} & \multicolumn{2}{c}{$|V|$ (requesters) } \\ \cmidrule(lr){3-4} \cmidrule(lr){5-6}
 Time interval & & MEAN & \multicolumn{1}{c}{SD} & MEAN & \multicolumn{1}{c}{SD} \\ \hline 
Manhattan, & 10/06&76.1&13.1&80.0&11.1 \\ 
$30$ seconds & 10/10&96.6&21.6&100.1&17.4 \\ \hline
Queens, &10/06&88.3&27.0&92.3&22.7 \\ 
$300$ seconds &10/10&89.8&28.1&95.4&23.1 \\ \hline
Bronx, &10/06 &5.3&2.7&5.3&2.8\\ 
$300$ seconds &10/10 &6.2&2.7&6.2&2.6\\ \hline
Brooklyn, & 10/06 &26.4&5.6&26.5&5.7\\ 
$300$ seconds & 10/10&26.7&7.3&27.2&5.3\\ \hline
\end{tabular}
\end{center}
\caption{Summary of real dataset of ride-hailing platform.}\label{dataset}
\end{table}

We recreated situations from the data of each region and set the inputs $U$, $V$, $w_{(u,v)}$, and the distribution of $\xi_v$ at each time.\\
(i) $U$: We assumed that it is possible to dispatch taxis that have completed a request, and extracted taxi ride data from the dataset that had a drop-off time within $t_s$ seconds from the target minute as $U$. The location of each taxi $u \in U$ was set by adding Gaussian noise to the center point of the drop-off area. This is because the taxi ride data records only the areas of the pick-up/drop-off. \\
(ii) $V$: We extracted taxi ride data that had a pick-up time within $t_s$ seconds from the target minute as the requester set $V$. We generated the origin/destination points for each requester $v \in V$ by adding Gaussian noise to the center point of the pick-up/drop-off area.
\\
(iii) $w_{(u,v)}$: Let $w_{(u,v)}=-18.0 \tau_{(u,v)}$. 
Here, $\tau_{(u,v)}$ is the time required for taxi $u$ to fulfill request $v$, and it is calculated from the destination and origin of requester $v$ and the location of taxi $u$. 
The parameter value $18.0$ was the taxi driver's opportunity cost that was based on the driver's income.\\
(iv) Distribution of $\xi_v$: We defined $p_v$ in two ways. 
The first way was as a linear function $p_v^{L}:[q_v, 1.5 q_v] \to [0, 1]$ such that 
\[p_v^{L}(x):= -\frac{2}{q_v} x + 3,
\]
where $q_v$ is a constant scalar. Here, $q_v$ is the actually paid amount for each request $v$ in the data set, and we assumed that $\xi_v(x_v) \sim \textrm{Bin}(1, p^{L}_v(x_v))$. The second way defined $p_v^{S}(x):(-\infty,\infty) \to (0,1)$ with the following sigmoid model:
\[p_v^{S}(x):= 1- \frac{1}{1+e^{- (x-\beta q_v)/(\gamma |q_v|) }},\]
where $\beta$ and $\gamma$ are constants, and we set $\beta=1.3$ and $\gamma=0.3 \sqrt{3}/\pi$. The same $q_v$ as in the linear function was used. Moreover, we assumed that $\xi_v(x_v) \sim \textrm{Bin}(1, p^{S}_v(x_v))$.

\subsubsection{Metric}
We used the approximated objective value (that is, approximated expected benefits of the platform provider): ${\rm obj}:=\frac{1}{10^2} \sum_{\ell=1}^{10^2} f(\hat{\bx},\bmxi^\ell(\hat{\bx}))$, where $\hat{\bx}$ is the output of each method and $\bmxi^\ell(\hat{\bx}) \sim D(\hat{\bx})$, as a metric to calculate the expected profit obtained at each time.

\subsubsection{Baselines} 
\label{subsec:rideshare_experiment}
We compared the proposed algorithm with two state-of-the-art methods.

\noindent{\bf MAPS \citep{Tong2018DynamicPI}:} It is an approximation algorithm for area-basis pricing of taxi services. 
We divided up the requesters into groups by area and defined the acceptance probability ($S^g(p)$ in \citep{Tong2018DynamicPI}) for each area by taking the average of the individual acceptance probability functions within the area.
Moroever, we assumed that each taxi is dispatchable for a requester whose distance is within $2$ km from the taxi.
Then, we used MAPS to obtain the prices. 

\noindent{\bf LinUCB \citep{li2010contextual}:} It is a generic contextual bandit algorithm used by \citep{chen}. As the arms of the method, we chose pricing factors that were multiples of the base price, $\{0.6, 0.8, 1.0 , 1.2, 1.4\}$. The base price for each request was calculated according to the trip distance and the average price per unit distance for all requests in the period of July to September, 2019. As features for learning, we used pick-up areas, drop-off areas, hours, and trip distance. The initial parameter $\theta_\tau$ for each arm $\tau \in \{0.6, 0.8, 1.0, 1.2, 1.4\}$ was learned through $11040$ runs using request data from July to September, 2019.

\begin{table}[t!]
\begin{center}
\begin{tabular}{crrrrrrr} \hline
 Place \& &\multicolumn{1}{c}{Day} 
 & \multicolumn{2}{c}{Proposed} 
&\multicolumn{2}{c}{MAPS}
&\multicolumn{2}{c}{LinUCB} 
\\ \cmidrule(lr){3-4} \cmidrule(lr){5-6} \cmidrule(lr){7-8} 
$p_v$ & &\multicolumn{1}{c}{obj} &\multicolumn{1}{c}{time(s)}& \multicolumn{1}{c}{obj}&\multicolumn{1}{c}{time(s)}&\multicolumn{1}{c}{obj}&\multicolumn{1}{c}{time(s)} \\ \hline
Manhattan,&10/6&{\bf 1.07e+3}&5.18e+0&7.69e+2&9.84e-2&5.66e+2&9.27e+0 \\ 
Linear&10/10&{\bf 1.46e+3}&8.70e+0&1.01e+3&1.21e-1&7.56e+2&1.16e+1 \\ \hline
Manhattan,&10/6&{\bf 9.82e+2}&6.62e+0&7.33e+2&1.25e-1&6.00e+2&9.41e+0 \\ 
Sigmoid&10/10&{\bf 1.30e+3}&1.11e+1&9.67e+2&1.47e-1&7.99e+2&1.16e+1 \\ \hline
Queens,&10/6&{\bf 2.18e+3}&6.07e+0&3.84e+2&5.55e-2&7.84e+2&1.18e+1 \\ 
Linear &10/10&{\bf 2.44e+3}&6.90e+0&4.41e+2&6.02e-2&1.06e+3&1.20e+1 \\ \hline
Queens, &10/6&{\bf 1.95e+3}&5.47e+0&3.80e+2&8.13e-2&1.08e+3&1.14e+1 \\ 
Sigmoid&10/10&{\bf 2.23e+3}&7.79e+0&4.49e+2&9.09e-2&1.31e+3&1.15e+1 \\ \hline
Bronx, &10/6&{\bf 6.83e+1}&8.06e-3&2.35e+1&3.89e-3&2.66e+1&1.84e-1 \\ 
Linear &10/10&{\bf 9.42e+1}&1.03e-2&3.70e+1&5.46e-3&3.89e+1&2.19e-1 \\ \hline
Bronx,&10/6&{\bf 6.37e+1}&1.05e-2&2.46e+1&5.02e-3&3.67e+1&1.89e-1 \\ 
Sigmoid&10/10&{\bf 8.73e+1}&1.34e-2&3.83e+1&6.85e-3&5.00e+1&2.32e-1 \\ \hline
Brooklyn,&10/6&{\bf 3.37e+2}&2.37e-1&1.83e+2&2.24e-2&1.42e+2&2.48e+0 \\ 
Linear &10/10&{\bf 3.98e+2}&2.66e-1&2.06e+2&2.84e-2&1.69e+2&2.44e+0 \\ \hline
Brooklyn,&10/6&{\bf 3.04e+2}&2.73e-1&1.76e+2&2.91e-2&1.62e+2&2.46e+0 \\ 
Sigmoid &10/10&{\bf 3.58e+2}&3.56e-1&1.96e+2&3.71e-2&1.91e+2&2.52e+0 \\ \hline
 \end{tabular}
\end{center}
\caption{Results of ride-hailing platform simulations. Each result represents the average of $120$ dispatch runs. The best value for each dataset in obj is in bold. } \label{real}
\end{table}

\subsubsection{Experimental results}
Table \ref{real} shows the results of the experiments. Regardless of the region or form of the function $p_v$, the proposed algorithm outperformed all of the compared methods in terms of the objective value. As for the computational time, the proposed algorithm solved the problem quickly enough for practical use, although MAPS took less time.

\subsection{Crowd-sourcing market}
We conducted experiments in a crowd-sourcing platform whose matching procedure is described in Section~\ref{sec:matching_procedure}.

\subsubsection{Data set and parameter setup}
We used an open crowd-sourcing dataset \citep{Buckley10-notebook}. The data set contains records of worker's judgments on the task of checking the relevance of a given topic and a web page. Each record has elements of (topic ID, worker ID, document ID, judgment). The judgments are divided up into five categories: highly relevant, relevant, non-relevant, unknown, and broken link. Here, broken link indicates that the web page cannot be viewed. This data set is summarized in Table \ref{dataset2}. We used this data to replicate the worker-task matching situations and conducted multiple experiments with it. 

\begin{table}[t]
\begin{center}
 \begin{tabular}{rrrrr} \hline
 All data&Topic ID & Document ID & \multicolumn{1}{c}{Worker ID} & \multicolumn{1}{c}{Judgments} \\ \hline
98453 &100 &19902 &766 &5 \\ \hline
\end{tabular}
\end{center}
\caption{Summary of the real dataset in crowd-sourcing market.}\label{dataset2}
\end{table}

In the experiments, we varied the parameters $(\phi, \psi)$ and $n$ and set conditions on the inputs, $U$, $V$, $w_{(u,v)}$, and the distribution of $\xi_v$ from the data as follows.

\noindent (i) $U$: Each task in the data was assumed to appear with a probability of $\psi$ with $U$ being the set of tasks created.

\noindent (ii) $V$: Each worker in the data was assumed to be active with a probability of $\phi$ with $V$ being the set of active workers.

\noindent (iii) $w_{(u,v)}$: We decided the correct judgment for each task in the data by majority vote. Let $\phi^v_s$ be the percentage of correct answers of worker $v$ for topic $s$. In our experiment, we assumed that $\phi^v_s$ are known a priori and $w_{(u,v)}:=\phi^v_{s(u)}$ for each $(v,\ u)$. Here, $s(u)$ means the topic of task $u$. This setup is based on a scheme that determines the value of a task depending on the skills of the worker. For topics that worker $v$ had never worked on, we set the percentage of correct answers of worker $v$ to be that for all of the tasks that worker $v$ performed.

\noindent (iv) Distribution of $\xi_v$: We defined $p_v$ in two ways. The first way was as a linear function $p_v^{L}:[1.5 q_v, q_v] \to [0, 1]$ such that
\begin{align*}
p_v^{L}(x):= \frac{2}{q_v} x - 2. \label{eq:p_v_PL}
\end{align*}
Since the data does not contain information on the amount paid to each worker $v$, we chose $q_v$ from a uniform distribution in $[-0.4,-0.1]$ and assumed that $\xi_v(x_v) \sim \textrm{Bin}(1, p^{L}_v(x_v))$. The second way used the following sigmoid model:
\[p_v^{S}(x):= 1- \frac{1}{1+e^{- (x-1.25 q_v)\pi/(0.25 |q_v|) }},\]
where the same setting of $q_v$ as in the linear function was used. We assumed that $\xi_v(x_v) \sim \textrm{Bin}(1, p^{S}_v(x_v))$.

\subsubsection{Metric}
We ran $10^3$ simulations for each setting. Then, we took the average of the resulting profits of $10^3$ simulations be the metric, obj, and regarded it to be an approximation of the expected benefits for the platform provider.

\subsubsection{Baselines}
We compared the proposed algorithm with two existing methods.

\noindent{\bf Myerson Reserve Price (MRP) \citep{myerson1981optimal}:} MRP is the optimal price in a single-item market when there is enough supply and there is no differentiation among buyers. In using this price, we considered multiple types of task to be one type and all workers to be of average ability. Specifically, we set $x_v:= {\rm arg max}_x \{ (x+ \hat{w} ) p(x) \}$ for all $v$, where $p(x):=\sum_{v \in V} p_v(x)$, and $\hat{w}$ is the average of the correct rate for all combinations of workers and tasks.

\noindent{\bf Capped MRP:} It is an analogue of the existing pricing strategy \citep{babai2015} created to tackle the problem of limited supply in single item-market when there is no differentiation among buyers. 
We considered multiple types of task to be one type and all workers to be of average ability. 
The method \citep{babai2015} determines the price while estimating $p(x)$, which is the acceptance probability of each worker for payment $x$; here, we took it to be a given function. 
Specifically, for all $v$, we set $x_v:={\rm arg max}_x \{ (x+\hat{w}) \min (|U|, |V| p(x)) \}$, where $p(x)$ and $\hat{w}$ are the same as those defined for MRP.

\subsubsection{Experimental results}
Table \ref{real2} shows the results of the simulation experiments with different parameter values. Regardless of the problem parameters or form of the function $p_v$, the proposed algorithm outperformed all baselines in terms of the objective value. In addition, the computational time of the proposed algorithm was short enough for practical use, although the compared methods required less computational time.

\begin{table}[t!]
\begin{center}
\begin{tabular}{ccrrrrrr} \hline
$(\phi, \psi)$&$p_v$& \multicolumn{2}{c}{Proposed} &\multicolumn{2}{c}{MRP} &\multicolumn{2}{c}{Capped  MRP} 
\\ \cmidrule(lr){3-4} \cmidrule(lr){5-6} \cmidrule(lr){7-8}
&&\multicolumn{1}{c}{obj} &\multicolumn{1}{c}{time(s)}& \multicolumn{1}{c}{obj}&\multicolumn{1}{c}{time(s)}&\multicolumn{1}{c}{obj}&\multicolumn{1}{c}{time(s)} \\ \hline
 (0.10, 5.0e-4)&Linear&{\bf 1.88e+1}&6.12e+0&1.34e+1&8.63e-3&1.32e+1&9.03e-3\\ 
&Sigmoid&{\bf 1.84e+1}&2.25e+1&1.26e+1&1.87e-2&1.40e+1&1.91e-2\\ \hline
 (0.10, 1.0e-3)&Linear&{\bf 2.09e+1}&1.91e+1&1.36e+1&1.09e-2&1.36e+1&1.13e-2\\ 
&Sigmoid&{\bf 2.03e+1}&5.57e+1&1.35e+1&1.94e-2&1.35e+1&1.98e-2\\ \hline
(0.05, 5.0e-4)&Linear&{\bf 1.05e+1}&2.59e+0&6.90e+0&9.25e-3&6.89e+0&9.69e-3\\ 
&Sigmoid&{\bf 1.03e+1}&4.78e+0&6.85e+0&1.20e-2&6.85e+0&1.23e-2\\ \hline
(0.05, 1.0e-3)&Linear&{\bf 1.04e+1}&2.56e+0&6.82e+0&9.17e-3&6.82e+0&9.52e-3\\ &Sigmoid&{\bf 1.02e+1}&1.21e+1&6.86e+0&1.34e-2&6.86e+0&1.38e-2\\ \hline
\end{tabular}
\end{center}
\caption{Results of real dataset simulation. Each result represents the average of $10^3$ simulation runs. The best value for each dataset in obj is in bold.
} \label{real2}
\end{table}

\section{Conclusion}
We formulated the problem of optimizing the price by taking into consideration the resulting bipartite graph matching, given the effect of the price on the probabilistic uncertainty in the graph. Our problem is a generalization of \citep{Tong2018DynamicPI} and can handle more practical situations in resource allocation platforms. We developed a $(1-1/e)$-approximation algorithm under the assumption that a convex min-cost flow problem can be solved exactly. 
We conducted simulation experiments on ride-sharing platforms and crowd-sourcing platforms, which showed that the proposed algorithm outputs more profitable solutions compared with the conventional methods in a practical amount of time.

Future work will include showing the effectiveness of the proposed algorithm by applying it to actual services, speeding it up, proving a tight approximation ratio for it, and developing an algorithm that allows Assumption~\ref{asp:average_rand} to be relaxed.

\section{Proof}
\subsection{Proof of Lemma~\ref{lem:inverse_concave}}
First, we will show that $\bar{\xi}_v'(x) < 0$ for all $x \in \Int(\mathcal X_v)$ and $v \in V$. Since $\bar{\xi}_v'(x)\le 0$ from Assumption~\ref{asp:average_rand}(a) and the definition \eqref{def:bar_xi} of $\bar{\xi}_v$, we will show $\bar{\xi}_v'(x)\neq 0$ for $x \in \Int(\mathcal X_v)$. We assume that there exist $b \in \Int(\mathcal X_v)$ which satisfy $\bar{\xi}_v'(b)=0$ for $v \in V$ and then derive a contradiction. From conditions (a) and (b) of Assumption \ref{asp:average_rand}, $p_v(x)>0$ for all $x \in \Int(\mathcal X_v)$. Then, $\bar{\xi}_v(x)=n_vp_v(x) > 0$ for all $x \in \Int(\mathcal X_v)$. Then, the following holds for arbitrary $x$ satisfying $x \le b$ and $x \in \Int (\mathcal X_v)$:
\begin{align}
0 \ge \bar{\xi}_v'(x)/\bar{\xi}_v(x) \ge \bar{\xi}_v'(b)/\bar{\xi}_v(b) =0,\label{eq:pvtx_0_0_2}
\end{align}
where the first inequality holds from $\bar{\xi}_v(x)>0$ and $\bar{\xi}_v'(x) \le 0$, and the second inequality holds since $\bar{\xi}_v'(x)/\bar{\xi}_v(x)$ is monotonically non-increasing from Assumption \ref{asp:average_rand}(c). From \eqref{eq:pvtx_0_0_2}, we have $\bar{\xi}_v'(x) = 0$ for any $x$ satisfying $x \le b$ and $x \in \Int (\mathcal X_v)$. This fact contradicts that $\bar{\xi}_v$ is bijective from Assumption \ref{asp:average_rand}. Therefore, $\bar{\xi}_v'(x) \neq 0$ for any $x \in \Int(\mathcal X_v)$, and thus $\bar{\xi}_v'(x) < 0$ for any $x \in \Int(\mathcal X_v)$.

Next, we show that $-\bar{\xi}_v^{-1}(z) z$ is convex w.r.t. $z \in \mathcal Z_v$ for all $v \in V$. For any $v \in V$, there exists only one $y$ that satisfies $z=\bar{\xi}_v(y)$ from Assumption~\ref{asp:average_rand} (a) when $z \in \mathcal Z_v$. Then, the following equality holds for any $z \in \Int (\mathcal Z_v)$:
\begin{align}
\left(\bar{\xi}_v^{-1}\left(z \right) z \right)' 
=\bar{\xi}_v^{-1}\left( z \right) + z \left( \bar{\xi}_v^{-1} (z) \right)' 
=y+\bar{\xi}_v(y)/\bar{\xi}'_v(y), \label{rev_proof}
\end{align}
where the second equality comes from $\bar{\xi}'_v(y) \neq 0$ for $y \in \Int(\mathcal X_v)$. Here, we consider that $z^1$ and $z^2$ satisfy $z^1 \le z^2$ and $z_1, z_2 \in \Int (\mathcal Z_v)$. Moreover, we let $y^1$ satisfy $z^1=\bar{\xi}_v(y^1)$ and $y^2$ satisfy $z^2=\bar{\xi}_v(y^2)$. Then, since $\bar{\xi}_v$ is monotonically decreasing from Assumption~\ref{asp:average_rand} (a), we have
\begin{align}
y^1 \ge y^2. \label{eq:y1_y2} 
\end{align}
Here, Assumption \ref{asp:average_rand} (c) and the definition \eqref{def:bar_xi} of $\bar{\xi}_v$ yield that $\bar{\xi}_v(y)/\bar{\xi}'_v(y)$ is monotonically non-decreasing for any $y\in \Int (\mathcal X_v)$. Then, $y+\bar{\xi}_v(y)/\bar{\xi}'_v(y)$ is monotonically non-decreasing for any $y \in \Int (\mathcal X_v)$. From this fact, \eqref{rev_proof}, and \eqref{eq:y1_y2}, we have $\left(\bar{\xi}_v^{-1}\left(z^1 \right) z^1 \right)' = y^1+\bar{\xi}_v(y^1)/\bar{\xi}'_v(y^1) \ge y^2+\bar{\xi}_v(y^2)/\bar{\xi}'_v(y^2) = \left(\bar{\xi}_v^{-1}\left(z^2 \right) z^2 \right)'$. Since $\left(\bar{\xi}_v^{-1}\left(z \right) z\right)'$ is monotonically decreasing for any interior point $z_1$ and $z_2$ of $\mathcal Z_v$, we find that $\bar{\xi}_v^{-1}\left(z\right) z$ is concave. This holds for all $v \in V$. $\Box$

\subsection{Proof of Theorem~\ref{thm:approximation_ratio} 
\label{prooftheorem1}}
First, we show that $\mathbb{E}_{\bmxi(\bx) \sim D(\bx)}[f(\bx,\bmxi)] \le \hat{f}(\bx)$. Consider a continuous relaxation of problem ${\rm (P_{sub})}$ by replacing $z_{(u,v)} \in \Z_{\ge0}$ with $z_{(u,v)} \ge 0$, which can be written as follows:
\begin{align}
{\rm (LP_p)}\ \ \max_{\bz} &\ \ \ \bh_1^\top \bz \nonumber \\
{\rm s.t. } &\ \ \ \bH_1 \bz \le \bH_2 \bmxi +\bh_2,\ \ 0 \le \bz, \nonumber
\end{align}
where $\bh_1$,\ $\bh_2$,\ $\bH_1$,\ $\bH_2$ are constant vectors and constant matrices. Since the feasible region of ${\rm (P_{sub})}$ is compact for any $\bmxi$ due to $\bz\in \Z_{\ge 0}^{E}$ and the second constraints of ${\rm (P_{sub})}$, the feasible region of ${\rm (LP_p)}$ is also compact. Then, the optimal value of ${\rm (LP_p)}$ exists since its objective value is continuous. Let $g(\bx,\bmxi)$ be the optimal value of ${\rm (LP_p)}$. Then, since $g(\bx,\bmxi)$ is equal to $f(\bx,\bmxi)$ from \citep[Theorem 5.12, Theorem~\ref{thm:approximation_ratio}1.2]{korte2012combinatorial}, we have
\begin{align}
 \mathbb{E}_{\bmxi\sim D(\bx)} [g(\bx,\bmxi)]=\mathbb{E}_{\bmxi\sim D(\bx)} [f(\bx,\bmxi)]. \label{eq:gx_equal_fx}
\end{align}
Here, the dual problem of ${\rm (LP_p)}$ is as follows:
\begin{align*}
{\rm (LP_d)}\ \ \min_{\by} &\ \ \ (\bH_2 \bmxi +\bh_2)^\top \by \nonumber \\
{\rm s.t. } &\ \ \ \bH_1^\top \by \ge \bh_1,\ \ 0 \le \by. \nonumber
\end{align*}
Since the dual problem has the same optimal value as the primal problem in the linear problem, the optimal value of ${\rm (LP_d)}$ is equal to ${\rm (LP_p)}$, that is, $g(\bx,\bmxi)$. Let $\bY$ be the feasible region of ${\rm (LP_d)}$. Since $(\bH_2 \bmxi +\bh_2)^\top \by$ is concave in $\bmxi$ for any $\by \in \bY$, we obtain $\min\nolimits_{\by \in \bY}(\bH_2 \bmxi +\bh_2)^\top \by$; that is, $g(\bx,\bmxi)$ is concave in $\bmxi$ from \citep[Section 3.2.3]{boyd_vandenberghe_2004}. Then, for any $\bx$, we have 
\begin{align*}
\mathbb{E}_{\bmxi\sim D(\bx)} [f(\bx,\bmxi)]=\mathbb{E}_{\bmxi\sim D(\bx)} [g(\bx,\bmxi)] \le g(\bx, \mathbb{E}_{\bmxi\sim D(\bx)}[\bmxi]) =\hat{f}(\bx), 
\end{align*}
where the equality comes from \eqref{eq:gx_equal_fx}, the first inequality holds from Jensen's inequality, and the last equality from the definition of $\hat{f}$ and $g$.

Next, we show that $(1-1/e) \hat{f}(\bx) \le \E_{\bmxi\sim D(\bx)}[f(\bx,\bmxi)] $. We give proofs for two cases: (i) the case where $\xi_v$ for all $v \in V$ follows a binomial distribution $\mathrm{Bin}(n_v, p_{v}(x_v))$ and (ii) the case where $\xi_v$ for all $v \in V$ follows a Poisson distribution $\Po(n_v p_v(x_v))$.

(i) Case where $\xi_v$ follows a binomial distribution $\mathrm{Bin}(n_v, p_{v}(x_v))$.\\ First, we describe Problem A and prove Lemma \ref{lem:approx_rate_brub}. We create $(n_{v_i}-1)$ duplicates for each node $v_i \in V$ and define a new set of nodes $V':=\bigcup_{{v_i} \in V} V(v_i)$, where $V(v_i):=\{v^1_i,v^2_i\dots,v^{n_{v_i}}_i\}$. For each node $u_i \in U$, we create $(c_{u_i}-1)$ duplicates and define a new set of nodes $U':=\bigcup_{{u_i} \in U} U(u_i)$, where $U(u_i):=\{u^1_i,u^2_i\dots,u_i^{c_{u_i}}\}$. Let $E':=\{ (u_i^q,v_j^r) \mid u_i \in U, v_j \in V, (u_i,v_j) \in E, q \in \{1 ,\dots, c_{u_i}\}, r \in \{1 ,\dots, n_{v_i}\} \}$. After that, we describe Problem A for given $\bx$.

\paragraph{Problem A.} We consider an undirected bipartite graph $G = (U',V',E')$ where each node $v_j^r \in V'$ is available with probability $p_{v_j}(x_{v_j})$. Suppose that the following steps (i) and (ii) are repeated until $U'$ or $V'$ becomes empty. (i) Choose $v^r_j \in V'$ and $u_i^q \in U'$ and try to match them, which is called {\it probing}. The probing succeeds with probability $p_{v_j}(x_{v_j})$ and it fails with probability $1-p_{v_j}(x_{v_j})$. (ii) If the probing succeeds, $v^r_j$ and $u_i^q$ are removed from $V'$ and $U'$, respectively, with the benefit of $(x_{v_j}+w_{(u_i,v_j)})$. If the probing fails, no profit is made and $v^r_j$ is removed from $V'$. Here, {\it what is the most profitable probing strategy?} \vspace{2mm}

\begin{lemma}
[{\cite[Theorem 2]{brubach2021improved}}] \label{lem:approx_rate_brub}
Let $\E[{\rm ALG}]$ be the expected profit obtained by the algorithm in \citep{brubach2021improved} for Problem A. Then, $\E[{\rm ALG}] \ge (1-1/e) \hat{f}(\bx)$. 
\end{lemma}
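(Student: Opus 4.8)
The plan is to read Lemma~\ref{lem:approx_rate_brub} as a specialization of \citep[Theorem~2]{brubach2021improved}: Problem~A is, up to a relabeling, an instance of the offline stochastic-matching-with-commitment model analyzed there, so the only real work is to identify the linear-programming benchmark of that theorem with $\hat f(\bx)$. Concretely, in their model edges of a bipartite graph are probed sequentially; a probe of an edge incident to a $V'$-vertex $v_j^r$ succeeds with its availability probability $p_{v_j}(x_{v_j})$, a successful probe is committed (both endpoints matched and deleted), and a failed probe deletes $v_j^r$. This is verbatim the dynamics of Problem~A with edge reward $x_{v_j}+w_{(u_i,v_j)}$, so after the translation \citep[Theorem~2]{brubach2021improved} gives $\E[{\rm ALG}]\ge(1-1/e)\,\mathrm{LP}^\ast$, where $\mathrm{LP}^\ast$ is the optimal value of the fractional-matching benchmark on $G=(U',V',E')$.

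The substantive step is then to prove $\mathrm{LP}^\ast=\hat f(\bx)$. Writing $z_{e'}$ for the probability that a duplicated edge $e'=(u_i^q,v_j^r)\in E'$ is matched, the benchmark is $\max\sum_{e'}(x_{v_j}+w_{(u_i,v_j)})z_{e'}$ subject to $\sum_{e'\in\delta(v_j^r)}z_{e'}\le p_{v_j}(x_{v_j})$ at each stochastic copy (a vertex can only be matched when available), $\sum_{e'\in\delta(u_i^q)}z_{e'}\le 1$ at each resource copy, and $z_{e'}\ge 0$. Since all $n_{v_j}$ copies of $v_j$ share the same availability and edge weights and all $c_{u_i}$ copies of $u_i$ are interchangeable, this LP is invariant under permuting duplicates; averaging an optimal solution over these permutations and aggregating $z_e:=\sum_{q,r}z_{(u_i^q,v_j^r)}$ collapses it to a fractional b-matching on the original graph $G=(U,V,E)$. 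In particular the capacity at $v_j$ aggregates as
\[
\sum_{e\in\delta(v_j)} z_e \;=\; \sum_{r=1}^{n_{v_j}}\ \sum_{e'\in\delta(v_j^r)} z_{e'} \;\le\; n_{v_j}\,p_{v_j}(x_{v_j}) \;=\;\bar{\xi}_{v_j}(x_{v_j}),
\]
while the capacity at $u_i$ aggregates to $\sum_{e\in\delta(u_i)}z_e\le c_{u_i}$, and the objective becomes $\sum_{e=(u,v)\in E}(x_v+w_e)z_e$. To see that no value is lost I would exhibit the two directions explicitly: any solution of \eqref{low2} lifts by splitting $z_{(u_i,v_j)}$ evenly among its $c_{u_i}n_{v_j}$ duplicated edges (which one checks respects both copy-level constraints), and any $G'$-solution projects down by the above summation; hence the aggregated program is exactly \eqref{low2} and $\mathrm{LP}^\ast=\hat f(\bx)$. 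Combining with the first step yields $\E[{\rm ALG}]\ge(1-1/e)\hat f(\bx)$.

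The main obstacle I anticipate is matching the precise LP that underlies \citep[Theorem~2]{brubach2021improved} to the one written above: their benchmark could instead be phrased through probing variables $y_{e'}$ with a constraint $\sum_{e'\in\delta(v_j^r)}y_{e'}\le 1$, in which case one must verify that the substitution $z_{e'}=p_{v_j}(x_{v_j})\,y_{e'}$ reproduces the availability-weighted capacity $p_{v_j}(x_{v_j})$ on the $V'$-side (rather than a bare $1$), so that aggregation still lands on $\bar{\xi}_{v_j}(x_{v_j})=n_{v_j}p_{v_j}(x_{v_j})$. Getting this bookkeeping right is exactly what makes $\hat f(\bx)$, and not some unweighted relaxation, the correct benchmark; the symmetry/aggregation argument itself is routine once the constraint form is pinned down.
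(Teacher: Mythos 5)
Your proposal is correct and follows essentially the same route as the paper: instantiate the model of \citep{brubach2021improved} on the duplicated graph $(U',V',E')$ (availability $p_{v_j}(x_{v_j})$ on the $V'$-side, unit patience there, unlimited patience on $U'$), invoke their Theorem~2 against the LP benchmark, and then identify that benchmark with $\hat f(\bx)$. The only difference is that the paper simply asserts the LP is ``equivalent to \eqref{low2} under the settings,'' whereas you supply the verification explicitly (aggregation by summing over duplicates, lifting by even splitting), which is a correct filling-in of that asserted step rather than a different argument.
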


\begin{proof}
For problem \textbf{LP} defined in \citep{brubach2021improved}, let $V^{LP}:=U'\cup V'$, $E^{LP}:=E'$, $w^{LP}_{e}:=(x_{v_j}+w_{(u_i,v_j)})$ for all $e=(u_i^q,v_j^r) \in E'$, $t^{LP}_v:=1$ for all $v \in V' \subset V^{LP}$, $t^{LP}_u:=\infty$ for all $u \in U' \subset V^{LP}$, and $p^{LP}_{e}:=p_{v_j}(x_{v_j})$ for all $e \in \delta(v_j^r) \subseteq E^{LP}$. Under these settings, let $L^*$ be an optimal value of \textbf{LP}. Then, $\E[{\rm ALG}] \ge (1-1/e) L^*$ from \citep[Theorem~2]{brubach2021improved}. Since \textbf{LP} is equivalent to \eqref{low2} under the settings, $\E[{\rm ALG}] \ge (1-1/e) \hat{f}(\bx)$. 
\end{proof}

Next, we show $(1-1/e) \hat{f}(\bx) \le \E_{\bmxi\sim D(\bx)}[f(\bx,\bmxi)] $. From Lemma \ref{lem:approx_rate_brub}, if $\E_{\bmxi\sim D(\bx)}[f(\bx,\bmxi)] \ge \E[{\rm ALG}]$, we get $\E_{\bmxi\sim D(\bx)}[f(\bx,\bmxi)]\ge (1-1/e) \hat{f}(\bx)$. In Problem A, whether a matching involving $v \in V'$ succeeds or fails is determined regardless of other nodes' success and the order of $probings$. For a trial of Problem A, let $\bmu \in \{ 0,1 \}^{|V'|}$ be the variables that represent whether the matching is successful or not if each $v \in V'$ is chosen. Here, $\mu_{v}=1$ in the case of success, and $\mu_{v}=0$ in the case of failure. Then, let $\bz^\dagger(\bmu)$ be the matching done by the algorithm of \citep{brubach2021improved}. Here, $z_{(u,v)}^\dagger(\bmu)=1$ indicates that $u \in U'$ and $v \in V'$ are matched, and $z_{(u,v)}^\dagger(\bmu)=0$ indicates that they are not matched. Let $\hat{\bz}^\dagger$ and $\hat{\bmu}$ be the vectors summed over the replicated nodes, that is, $\hat{z}^\dagger_{(u_i,v_j)}= \sum_{q} \sum_{r} z_{(u_i^q,v_j^r)}^\dagger(\bmu)$ and $\hat{\mu}_{v_j}=\sum_{r} \mu_{v_j^r}$ for all $u_i \in U$ and $v_j \in V$. Here, the profit obtained by the algorithm is less than or equal to $f(\bx,\hat{\bmu})$ for any realization of $\bmu$: (i) $\hat{\bz}^\dagger$ is included in the feasible region of ${\rm (P_{sub})}$ with $\bmxi:=\hat{\bmu}$ from the setting of problem A; the profit obtained by the algorithm is $\sum_{(u,v)\in E} (x_v+w_{(u,v)}) \hat{z}^\dagger_{(u,v)}$, that is, the objective function of ${\rm (P_{sub})}$; $f(\bx,\hat{\bmu})$ is the optimal value of ${\rm (P_{sub})}$ with $\bmxi:=\hat{\bmu}$. Since the distribution of $\hat{\bmu}$ is the same as $D(\bx)$, we have $\E_{\hat{\bmu}\sim D(\bx)}[f(\bx,\hat{\bmu})] \ge\E[{\rm ALG}]$. Therefore, from Lemma \ref{lem:approx_rate_brub}, $\E_{\bmxi\sim D(\bx)}[f(\bx,\bmxi)] \ge (1-1/e) \hat{f}(\bx)$. 

(ii) Case where $\xi_v$ follows a Poisson distribution $\Po(n_v p_v(x_v))$.\\
Let $\Po(\bar{\bmxi}(\bm{x})):=\prod_{v \in V} \Po(\bar{\xi}_v(x_v))=\prod_{v \in V} \Po(n_v p_v(x_v))$. We will show that
\begin{align} \label{eq: theorem_2_Poisson}
 \prn*{1 - 1/e} \hat{f}(\bm{x}) \leq \mathbb{E}_{\bm{\xi} \sim \Po(\bar{\bmxi}(\bm{x}))} \bra{f(\bm{x}, \bm{\xi})} \leq \hat{f}(\bm{x}).
\end{align}
Let $(D^n(\bm{x}))_{n \in \Z_{> 0}}$ be a sequence of tuples of distributions, where $D^n(\bm{x}):= (D^n_v(x_v))_{v \in V}$ and $D^n_v(x_v) := \textrm{Bin} \prn*{n,\frac{\bar{\xi}_v(x_v)}{n}}$. A similar discussion as in (i) shows that the following holds for $n \in \Z_{> 0}$:
\begin{align*}
 \prn*{1 - \frac{1}{e}} \hat{f}(\bm{x}) \leq \mathbb{E}_{\bm{\xi} \sim D^n(\bm{x}) } \bra{f(\bm{x}, \bm{\xi})} \leq \hat{f}(\bm{x}).
\end{align*}
Therefore, if
\begin{align} \label{eq: lim}
 \lim_{n \to \infty} \mathbb{E}_{\bm{\xi} \sim D^n(\bm{x}) } \bra{f(\bm{x}, \bm{\xi})} = \mathbb{E}_{\bm{\xi} \sim \Po(\bar{\bmxi}(\bm{x}))} \bra{f(\bm{x}, \bm{\xi})},
\end{align}
then \cref{eq: theorem_2_Poisson} holds. Let $\bmxi^{\Po}$ be a random variable following $\Po(\bar{\bmxi}(\bm{x}))$, $\bmxi^{(n)}$ be a random variable following $D ^n(\bm{x})$, and $C \coloneqq \sum_{u \in U} c_u \max_{v \in N(u)} \abs*{w_{(u,v)} + x_v}$, where $N(u)$ is the set of nodes adjacent to $u$. Then, we have
\begin{align}
 &\abs*{
 \mathbb{E}_{\bm{\xi} \sim \Po(\bar{\bmxi}(\bm{x}))} \bra{f(\bm{x}, \bm{\xi})} - \mathbb{E}_{\bm{\xi} \sim D^n(\bm{x}) } \bra{f(\bm{x}, \bm{\xi})}
 } \nonumber
 \\
 &= \abs*{
 \sum_{\bm{\xi}' \in \Z_{\geq 0}^V } \Pr \bra*{\bm{\xi}^{\Po} = \bm{\xi'}}f(\bm{x}, \bm{\xi'}) - 
 \sum_{\bm{\xi}' \in \Z_{\geq 0}^V } \Pr \bra*{\bm{\xi}^{(n)} = \bm{\xi'}}f(\bm{x}, \bm{\xi'})
 } \nonumber
 \\
 &= \abs*{
 \sum_{\bm{\xi}' \in \Z_{\geq 0}^V } \prn*{\Pr \bra*{\bm{\xi}^{\Po} = \bm{\xi'}} - \Pr \bra*{\bm{\xi}^{(n)} = \bm{\xi'}}} f(\bm{x}, \bm{\xi'})
 } \nonumber
 \\ 
 &\leq 
 \sum_{\bm{\xi}' \in \Z_{\geq 0}^V } \abs*{\Pr \bra*{\bm{\xi}^{\Po} = \bm{\xi'}} - \Pr \bra*{\bm{\xi}^{(n)} = \bm{\xi'}}} \abs*{f(\bm{x}, \bm{\xi'})} \nonumber
 \\
 &\leq C \sum_{\bm{\xi}' \in \Z_{\geq 0}^V } \abs*{\Pr \bra*{\bm{\xi}^{\Po} = \bm{\xi'}} - \Pr \bra*{\bm{\xi}^{(n)} = \bm{\xi'}}} \label{eq: bound_by_C}
 \\ 
 &= C \sum_{\bm{\xi}' \in \Z_{\geq 0}^V } \abs*{ \prod_{i = 1}^{|V|} \Pr \bra*{\xi^{\Po}_i = \xi'_i} - \prod_{i = 1}^{|V|} \Pr \bra*{\xi^{(n)}_i = \xi'_i}} \nonumber
 \\ 
 &\leq C \sum_{\bm{\xi}' \in \Z_{\geq 0}^V } \sum_{i=1}^{|V|} \prn*{ \abs*{\Pr \bra*{\xi^{\Po}_i = \xi'_i} - \Pr \bra*{\xi^{(n)}_i = \xi'_i}} \prod_{j=1}^{i-1} \Pr \bra*{\xi^{\Po}_j = \xi'_j} \prod_{j=i+1}^{|V|} \Pr \bra*{\xi^{(n)}_j = \xi'_j}} \label{eq: triangle}
 \\
 &= C \sum_{i=1}^{|V|} 
 \prn*{\sum_{\xi'_i \in \Z_{\geq 0}} \abs*{\Pr \bra*{\xi^{\Po}_i = \xi'_i} - \Pr \bra*{\xi^{(n)}_i = \xi'_i}}}
 \prn*{\prod_{j=1}^{i-1} \sum_{\xi'_j \in \Z_{\geq 0}} \Pr \bra*{\xi^{\Po}_j = \xi'_j}}
 \prn*{\prod_{j=i+1}^{|V|} \sum_{\xi'_j \in \Z_{\geq 0}} \Pr \bra*{\xi^{(n)}_j = \xi'_j}} \nonumber
 \\
 &= C \sum_{i=1}^{|V|} 
 \prn*{\sum_{\xi'_i \in \Z_{\geq 0}} \abs*{\Pr \bra*{\xi^{\Po}_i = \xi'_i} - \Pr \bra*{\xi^{(n)}_i = \xi'_i}}}, \label{eq:po_bin_equiv}
 \end{align}
where the inequality \cref{eq: bound_by_C} holds since $|f(\bm{x}, \bm{\xi})| \leq |C|$ for arbitrary $\bm{\xi} \in \Z_{\geq 0}^V$ from the definition of ${\rm (P_{sub})}$. Moreover, \cref{eq: triangle} comes from the following lemma.
\begin{lemma}
If $a_1, \ldots, a_m$ and $b_1, \ldots, b_m$ is non-negative real value, then
 \begin{align}
 \abs*{\prod_{i=1}^m a_i - \prod_{i=1}^m b_i} \leq \sum_{i=1}^m \prn*{\abs*{a_i - b_i} \prn*{\prod_{j=1}^{i-1} a_j} \prn*{\prod_{j=i+1}^m b_j}}.
 \end{align}
\end{lemma}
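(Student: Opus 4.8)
The plan is to prove this as a standard telescoping identity, exploiting that all the factors are non-negative so that absolute values interact cleanly with the triangle inequality. First I would introduce the \emph{hybrid products} $P_k := \left(\prod_{j=1}^{k} a_j\right)\left(\prod_{j=k+1}^m b_j\right)$ for $k = 0, 1, \dots, m$, with the convention that an empty product equals $1$. By construction $P_0 = \prod_{j=1}^m b_j$ and $P_m = \prod_{j=1}^m a_j$, so the quantity of interest is exactly $P_m - P_0$.

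The key step is to telescope: write $P_m - P_0 = \sum_{k=1}^m (P_k - P_{k-1})$ and observe that each consecutive difference carries a common factor. Indeed, $P_k$ and $P_{k-1}$ differ only in whether the $k$-th slot holds $a_k$ or $b_k$, so factoring out $\left(\prod_{j=1}^{k-1} a_j\right)\left(\prod_{j=k+1}^m b_j\right)$ gives $P_k - P_{k-1} = \left(\prod_{j=1}^{k-1} a_j\right)\left(\prod_{j=k+1}^m b_j\right)(a_k - b_k)$. Summing over $k$ then yields the exact identity $\prod_{i=1}^m a_i - \prod_{i=1}^m b_i = \sum_{k=1}^m \left(\prod_{j=1}^{k-1} a_j\right)\left(\prod_{j=k+1}^m b_j\right)(a_k - b_k)$.

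Finally I would apply the triangle inequality to this sum and push the absolute value inside each term. Here the non-negativity hypothesis does the real work: since every $a_j \ge 0$ and $b_j \ge 0$, the prefix product $\prod_{j=1}^{k-1} a_j$ and the suffix product $\prod_{j=k+1}^m b_j$ are themselves non-negative, so $\big|(\prod a_j)(\prod b_j)(a_k - b_k)\big| = (\prod a_j)(\prod b_j)|a_k - b_k|$, and the claimed bound follows immediately.

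I do not anticipate a genuine obstacle here: the only points requiring care are bookkeeping rather than mathematics, namely fixing the empty-product convention so that the boundary terms $k=1$ and $k=m$ are handled correctly, and making explicit that it is the non-negativity (not merely the reality) of the entries that lets the absolute value factor through the prefix and suffix products. Everything else is routine algebra.
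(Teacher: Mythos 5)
Your proposal is correct and is essentially identical to the paper's own proof: your hybrid products $P_k$ are exactly the paper's interpolating sequence $c_i$, and both arguments proceed by the same telescoping decomposition, triangle inequality, and use of non-negativity to factor absolute values through the prefix and suffix products.
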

\begin{proof}
Let $(c_i)_{i=0, \ldots, m}$ be the real number sequence such that $c_0 = \prod_{j=1}^m b_j, c_m = \prod_{j=1}^m a_j$, and $c_i = \prn*{\prod_{j=1}^i a_j} \prn*{\prod_{j=i+1}^m b_j}$ for $i=1,2,\dots,m-1$. Then, $c_i - c_{i-1} = (a_i - b_i) \prn*{\prod_{j=1}^{i-1} a_j} \prn*{\prod_{j=i+1}^m b_j}$ for $i=1, \ldots, m$. Therefore, 
\begin{align*}
 \abs*{\prod_{i=1}^m a_i - \prod_{i=1}^m b_i}
 &= \abs*{c_m - c_0} \\
 &\leq \sum_{i=1}^{m} \abs*{c_i - c_{i-1}} \\ 
 &= \sum_{i=1}^{m} \abs*{(a_i - b_i) \prn*{\prod_{j=1}^{i-1} a_j} \prn*{\prod_{j=i+1}^m b_j}} \\ 
 &= \sum_{i=1}^m \prn*{\abs*{a_i - b_i} \prn*{\prod_{j=1}^{i-1} a_j} \prn*{\prod_{j=i+1}^m b_j}},
\end{align*}
where the inequality comes from the triangle inequality.
\end{proof}
 
Here, the following theorem holds.
\begin{theorem}
[A special case of Le Cam's theorem \cite{le1960approximation}] \label{thm:lecams}
 \begin{align}
 \sum_{\xi_i \in \Z_{\geq 0}} \abs*{\Pr \bra*{\xi^{\Po}_i = \xi'_i} - \Pr \bra*{\xi^{(n)}_i = \xi'_i}} \leq \frac{2 \bar{\xi}_i(x_i)^2}{n}.
 \end{align}
\end{theorem}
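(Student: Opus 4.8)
The plan is to recognize this statement as the classical Le Cam bound on the Poisson approximation of a sum of independent Bernoulli trials, specialized to identical success probabilities. Writing $\lambda := \bar{\xi}_i(x_i)$ for brevity, the variable $\xi_i^{(n)} \sim \mathrm{Bin}(n, \lambda/n)$ is the sum of $n$ i.i.d.\ $\mathrm{Ber}(\lambda/n)$ variables, while $\xi_i^{\Po} \sim \Po(\lambda)$ is the sum of $n$ i.i.d.\ $\Po(\lambda/n)$ variables, since the convolution of independent Poissons is Poisson with the summed rate. The quantity on the left is exactly $2\, d_{\mathrm{TV}}$, twice the total variation distance between these two laws, so it suffices to show $d_{\mathrm{TV}}\!\left(\mathrm{Bin}(n,\lambda/n), \Po(\lambda)\right) \le \lambda^2/n$ and then double.

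First I would establish the single-trial estimate $d_{\mathrm{TV}}\!\left(\mathrm{Ber}(p), \Po(p)\right) = p\,(1 - e^{-p})$. This is an elementary computation: comparing the two mass functions term by term at $k=0$, $k=1$, and $k \ge 2$, the absolute differences sum to $(e^{-p}-1+p) + p(1-e^{-p}) + (1 - e^{-p} - p e^{-p}) = 2p(1-e^{-p})$, which is the $\ell_1$ distance, hence $d_{\mathrm{TV}} = p(1-e^{-p})$. Using $1 - e^{-p} \le p$ then gives the clean bound $d_{\mathrm{TV}}\!\left(\mathrm{Ber}(p), \Po(p)\right) \le p^2$.

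The main step is a subadditivity (tensorization) property of total variation under convolution: for families $(\mu_j)$ and $(\nu_j)$ of distributions on $\Z_{\ge 0}$, one has $d_{\mathrm{TV}}(\mu_1 * \cdots * \mu_n,\, \nu_1 * \cdots * \nu_n) \le \sum_{j} d_{\mathrm{TV}}(\mu_j, \nu_j)$. I would prove this via the coupling characterization $d_{\mathrm{TV}}(\mu,\nu) = \min_{(X,Y)} \Pr[X \ne Y]$ over couplings with the prescribed marginals: choose an optimal coupling $(X_j, Y_j)$ of $(\mu_j, \nu_j)$ for each $j$, independent across $j$. Then $(\sum_j X_j, \sum_j Y_j)$ is a coupling of the two convolutions, and since $\{\sum_j X_j \ne \sum_j Y_j\} \subseteq \bigcup_j \{X_j \ne Y_j\}$, a union bound yields $\Pr[\sum_j X_j \ne \sum_j Y_j] \le \sum_j \Pr[X_j \ne Y_j] = \sum_j d_{\mathrm{TV}}(\mu_j, \nu_j)$, which bounds the left-hand side since $d_{\mathrm{TV}}$ is the minimal mismatch probability.

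Combining the two ingredients with $\mu_j = \mathrm{Ber}(\lambda/n)$ and $\nu_j = \Po(\lambda/n)$ for all $j$ gives $d_{\mathrm{TV}}\!\left(\mathrm{Bin}(n,\lambda/n), \Po(\lambda)\right) \le n\,(\lambda/n)^2 = \lambda^2/n$, and doubling produces the claimed $\ell_1$ bound $2\lambda^2/n = 2\,\bar{\xi}_i(x_i)^2/n$. I expect the coupling subadditivity argument to be the only nonroutine step; the per-trial computation is purely arithmetic, and the identification of the binomial and Poisson laws as $n$-fold convolutions is immediate from standard distributional facts. Alternatively one could simply invoke Le Cam's theorem directly, as the statement already attributes the result to \cite{le1960approximation}, but the coupling proof above is short and self-contained.
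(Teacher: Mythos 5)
Your proposal is correct, but it differs from the paper in kind: the paper never proves this statement at all. It is invoked purely as a known result --- a special case of Le Cam's theorem \cite{le1960approximation}, whose general form bounds $\sum_{k}\left|\Pr[S_n=k]-e^{-\lambda}\lambda^k/k!\right|$ by $2\sum_{j} p_j^2$ for a sum $S_n$ of independent Bernoulli$(p_j)$ variables; the paper's constant $2\bar{\xi}_i(x_i)^2/n$ is just this bound with all $p_j=\bar{\xi}_i(x_i)/n$. What you supply is essentially the classical proof of that cited theorem: (1) the exact single-trial computation $d_{\mathrm{TV}}(\mathrm{Ber}(p),\Po(p))=p(1-e^{-p})\le p^2$, which your term-by-term $\ell_1$ calculation at $k=0$, $k=1$, $k\ge 2$ gets right; (2) the identification of $\mathrm{Bin}(n,\lambda/n)$ and $\Po(\lambda)$ as $n$-fold convolutions; and (3) subadditivity of total variation under convolution, proved via independent maximal couplings and a union bound. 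The final doubling correctly converts total variation into the $\ell_1$ quantity in the statement. Your route buys self-containedness and makes transparent where the factor $2$ and the quadratic dependence on $\bar{\xi}_i(x_i)$ come from; the paper's route buys brevity and defers to the literature, which is legitimate for a result this standard. One caveat, which applies equally to the paper's own setup of $D^n_v(x_v)=\mathrm{Bin}(n,\bar{\xi}_v(x_v)/n)$: the binomial law is only defined when $\bar{\xi}_i(x_i)/n\le 1$, so both the statement and your proof should be read as holding for all sufficiently large $n$, which is all the surrounding limiting argument ($n\to\infty$) requires.
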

From Theorem \ref{thm:lecams} and \eqref{eq:po_bin_equiv},
\begin{align*}
 \abs*{
 \mathbb{E}_{\bm{\xi} \sim \Po(\bar{\bmxi}(\bm{x}))} \bra{f(\bm{x}, \bm{\xi})} - \mathbb{E}_{\bm{\xi} \sim D^n(\bm{x}) } \bra{f(\bm{x}, \bm{\xi})}
 } 
 \leq 
 \frac{2 C \sum_{i=1}^{|V|}\bar{\xi}_i(x_i)^2}{n}.
\end{align*}
Therefore, when $n \to \infty$, it yields that $\mathbb{E}_{\bm{\xi} \sim D^n(\bm{x}) } \bra{f(\bm{x}, \bm{\xi})} \to \mathbb{E}_{\bm{\xi} \sim \Po(\bar{\bmxi}(\bm{x}))} \bra{f(\bm{x}, \bm{\xi})}$. 

\subsection{Proof of Lemma \ref{lem:opt_exist}} 
\label{prooflemma1}
To prove Lemma \ref{lem:opt_exist}, we will first prove Lemmas \ref{lemma: existence of M} and \ref{lemma: bounding region}.

\begin{lemma} \label{lemma: existence of M}
Suppose that Assumption \ref{asp:average_rand} holds. Then, the following holds for sufficiently large $M$ and all $v \in V$ such that $\mathcal X_v$ is unbounded from above:
\begin{enumerate}
\item[(i)] For arbitrary $M' > M$, $e=(u, v) \in E$, and $\ell \in \{0, \max_{v \in V} \bar{\xi}^{-1}_{v} \left(\frac{1}{|E|} \right) + \max_{e \in E} w_{e} \}$, the following holds:
\begin{align}
 (M' + w_{e} - \ell)\bar{\xi}_v(M') - (M + w_{e} - \ell)\bar{\xi}_v(M) < 0. \label{eq: 1st condition}
\end{align}

\item[(ii)] 
\begin{align}
\bar{\xi}_v(M) \leq \frac{1}{|E|}. \label{eq: 2nd condition}
\end{align}

\end{enumerate}
\end{lemma}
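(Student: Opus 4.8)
The plan is to establish the two parts separately and then choose a single threshold $M$ large enough to cover both, exploiting that the relevant index set (the $v\in V$ with $\mathcal X_v$ unbounded, the edges $e\in E$, and the two admissible values of $\ell$) is finite. Fix one such $v$.

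Part (ii) is immediate from the asymptotics of $\bar{\xi}_v$. Since $\bar{\xi}_v(x)=n_vp_v(x)$ and $\lim_{x\to\infty}p_v(x)=0$ by Assumption~\ref{asp:average_rand}(b), we have $\lim_{x\to\infty}\bar{\xi}_v(x)=0$, so there is a threshold past which $\bar{\xi}_v(x)\le 1/|E|$.

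For part (i), I would define $h(x):=(x+w_e-\ell)\bar{\xi}_v(x)$ and observe that the claimed inequality is exactly $h(M')<h(M)$ for all $M'>M$, which follows once $h$ is shown to be strictly decreasing on $[M,\infty)$. Differentiating gives $h'(x)=\bar{\xi}_v(x)+(x+w_e-\ell)\bar{\xi}_v'(x)$; since $\bar{\xi}_v'(x)<0$ on $\Int(\mathcal X_v)$ by Lemma~\ref{lem:inverse_concave}, the condition $h'(x)<0$ is equivalent to
\[
\frac{\bar{\xi}_v(x)}{-\bar{\xi}_v'(x)}<x+w_e-\ell .
\]
The crux is to bound the left-hand side uniformly. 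Writing $\bar{\xi}_v(x)/(-\bar{\xi}_v'(x))=1/\bigl(-p_v'(x)/p_v(x)\bigr)$ and using Assumption~\ref{asp:average_rand}(c) together with $p_v(x)>0$ and $p_v'(x)<0$, the quantity $-p_v'(x)/p_v(x)$ is positive and non-decreasing, so its reciprocal $\bar{\xi}_v(x)/(-\bar{\xi}_v'(x))$ is positive and non-increasing; hence it is bounded above on $[x_0,\infty)$ by its value $K_v$ at any fixed interior point $x_0$. Because $x+w_e-\ell\to\infty$, there exists a threshold $M_{v,e,\ell}\ge x_0$ with $M_{v,e,\ell}+w_e-\ell>K_v$, and then for every $x\ge M_{v,e,\ell}$ the displayed inequality holds, so $h'(x)<0$ and $h$ is strictly decreasing there, proving (i).

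Finally, taking $M$ to be the maximum of the thresholds from (ii) and of $M_{v,e,\ell}$ over the finitely many triples $(v,e,\ell)$ yields a single $M$ for which both parts hold. The main obstacle is the uniform bound on the hazard-type ratio $\bar{\xi}_v/(-\bar{\xi}_v')$: this is precisely where log-concavity (Assumption~\ref{asp:average_rand}(c)) is indispensable, since without it the ratio could grow linearly in $x$ and the product $(x+w_e-\ell)\bar{\xi}_v(x)$ need not be eventually decreasing.
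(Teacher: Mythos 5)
Your proof is correct and follows essentially the same route as the paper's: both differentiate the auxiliary function $h(x)=(x+c)\bar{\xi}_v(x)$ with $c=w_e-\ell$, invoke $\bar{\xi}_v'(x)<0$ from Lemma~\ref{lem:inverse_concave}, and use Assumption~\ref{asp:average_rand}(c) to get monotonicity of the hazard-type ratio (your bound $\bar{\xi}_v/(-\bar{\xi}_v')\le K_v$ is just the sign-flipped version of the paper's statement that $\bar{\xi}_v/\bar{\xi}_v'$ is non-decreasing), concluding that $h$ is eventually strictly decreasing. Your explicit maximization over the finitely many triples $(v,e,\ell)$ only spells out a uniformity step the paper leaves implicit.
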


\begin{proof}
From the second case of Assumption \ref{asp:average_rand} (b) and the definition \eqref{def:bar_xi} of $\bar{\xi}_v$, we have $\lim_{x \to \infty} \bar{\xi}_v(x) = 0$. Therefore, condition (ii) is satisfied for sufficiently large $M$. Lemma \ref{lemma: existence of M} holds if we show that condition (i) is satisfied when $M$ is sufficiently large.

Here, let $h_{cv}(x) \coloneqq (x+c)\bar{\xi}_v(x)$ for $c \in \mathbb{R}$. Then, for $x \in \Int(\mathcal X_v)$,
\begin{align}
 h'_{cv}(x) = \bar{\xi}_v(x) + (x+c)\bar{\xi}'_v(x) 
 = \bar{\xi}'_v(x) \left( \frac{\bar{\xi}_v(x)}{\bar{\xi}'_v(x)} + x+c \right),
\end{align}
where the last equality holds since $\bar{\xi}_v'(x) < 0$ (that is, $\bar{\xi}_v'(x) \neq 0$) from Lemma~\ref{lem:inverse_concave}. From Assumption~\ref{asp:average_rand}(c) and the definition \eqref{def:bar_xi} of $\bar{\xi}_v$, we have ${\bar{\xi}_v(x)}/{\bar{\xi}'_v(x)}$ is monotonically non-decreasing. If we take a sufficiently large $M$, then ${\bar{\xi}_v(x)}/{\bar{\xi}'_v(x)} + x + c > 0$ for any $x > M$. Therefore, since $\bar{\xi}_v'(x) < 0$, it yields $h_{cv}'(x) < 0$ for $x > M$; i.e., $h_{cv}(x)$ is monotonically decreasing for $x > M$. Hence, when $M$ is sufficiently large, $h_{cv}(M')-h_{cv}(M)<0$ for arbitrary $M'>M$. Then, we see that \eqref{eq: 1st condition} holds by letting $c := w_e - \ell$.
\end{proof}

Next, we prove Lemma \ref{lemma: bounding region}. This lemma shows that any feasible solution of (PA) can be modified by Algorithm \ref{alg:improve solution} to a feasible solution such that $x_i$ for given $i$ is contained in a bounded closed interval without loss of the objective value. 
We use Algorithm \ref{alg:improve solution} for all $i \in V$ to show that any feasible solution of (PA) can be modified into a feasible solution contained in a compact set without loss of the objective value. This fact leads to the existence of an optimal solution of (PA).

\begin{lemma} \label{lemma: bounding region}
Suppose that Assumption \ref{asp:average_rand} holds. Let $M$ be a constant that satisfies the conditions of Lemma~\ref{lemma: existence of M}. Let $L:=\max_{e \in E} w_e$. If Algorithm \ref{alg:improve solution} is applied to a feasible solution $(\bx, \bz)$ for (PA), then there exists edge $g$ satisfying the condition on line \ref{state:choose_gr} when the condition on line \ref{if:z_1} holds. Moreover, the following hold for the output $(\hat{\bx}, \hat{\bz})$ of Algorithm~\ref{alg:improve solution}:
\begin{enumerate}
\item[(I)] The output $(\hat{\bx}, \hat{\bz})$ is a feasible solution for (PA).
\item[(II)] The objective value of (PA) of the output $(\hat{\bx}, \hat{\bz})$ is greater than or equal to that of the input $(\bx, \bz)$.
\item[(III)] If $\mathcal X_i$ is unbounded above, then $\hat{x}_{v} = x_v$ for all $v \in V \setminus \{i\}$ and 
 \begin{align*}
 \hat{x}_i = 
 \begin{cases}
 x_{i}, & \text{if} \quad -L \leq x_{i} \leq M, \\
 -L, & \text{if} \quad x_{i} < -L, \\
 M, & \text{if} \quad x_{i} > M.
 \end{cases}
 \end{align*}
\item[(IV)] If $\mathcal X_i$ is bounded above, then $\hat{x}_{v} = x_v$ for all $v \in V \setminus \{i\}$ and 
 \begin{align*}
 \hat{x}_i = 
 \begin{cases}
 x_{i}, & \text{if} \quad -L \leq x_{i}, \\
 -L, & \text{if} \quad x_{i} < -L.
 \end{cases}
 \end{align*}
\end{enumerate}
\end{lemma}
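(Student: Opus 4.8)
The plan is to verify the four assertions by tracing Algorithm~\ref{alg:improve solution} and splitting into cases according to where $x_i$ sits relative to the clipping thresholds. Write $L:=\max_{e\in E}w_e$ and $\ell^{\star}:=\max_{v\in V}\bar{\xi}_v^{-1}(1/|E|)+L$, and use throughout that $\bar{\xi}_v$ is strictly decreasing (Lemma~\ref{lem:inverse_concave}) and that $M$ satisfies conditions (i) and (ii) of Lemma~\ref{lemma: existence of M}. When $-L\le x_i\le M$ (resp.\ $-L\le x_i$ if $\mathcal X_i$ is bounded above) the algorithm leaves $(\bx,\bz)$ untouched, so (I) and (II) are immediate and (III)/(IV) return $\hat x_i=x_i$; the work is therefore entirely in the two clipping cases, with the upper clip being the substantive one.

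For the lower clip ($x_i<-L$) I would first check $-L\in\mathcal X_i$: the without-loss-of-generality hypothesis gives some $x_i'\in\mathcal X_i$ with $x_i'>-\max_{e\in\delta(i)}w_e\ge -L$, and since $x_i<-L<x_i'$ and $\mathcal X_i$ is an interval, $-L\in\mathcal X_i$. For every $e=(u,i)\in\delta(i)$ we have $x_i+w_e<-L+w_e\le 0$, so each term $(x_i+w_e)z_e$ of the objective is non-positive; hence setting $\hat x_i:=-L$ and zeroing the flow on $\delta(i)$ is feasible (it meets the tightened capacity at $i$ trivially and relaxes the capacities at neighbouring $u$) and cannot decrease the objective. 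This settles (I)--(III) for this case, and (IV) is the same statement with the (automatically satisfied) upper bound $\sup\mathcal X_i$ replacing $M$.

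For the upper clip ($x_i>M$ with $\mathcal X_i$ unbounded above) we have $M\in\mathcal X_i$ for $M$ large, all coefficients $x_i+w_e>0$, and by condition (ii) the total flow $F:=\sum_{e\in\delta(i)}z_e\le\bar{\xi}_i(x_i)<\bar{\xi}_i(M)\le 1/|E|$. Set $\hat x_i:=M$, which raises the capacity at $i$ to $\bar{\xi}_i(M)$ but lowers every coefficient on $\delta(i)$ by $x_i-M$. To recover the objective I would route the freed capacity onto the maximum-weight edge $e^{\star}=(u^{\star},i)$, $w_{e^{\star}}=\max_{e\in\delta(i)}w_e$, adding up to $\bar{\xi}_i(M)-F$ units there. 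If $u^{\star}$ can absorb this within $c_{u^{\star}}$, the net change is $(M+w_{e^{\star}})(\bar{\xi}_i(M)-F)-(x_i-M)F$, which is positive: condition (i) with $\ell=0$ gives $(M+w_{e^{\star}})\bar{\xi}_i(M)>(x_i+w_{e^{\star}})\bar{\xi}_i(x_i)\ge (x_i+w_{e^{\star}})F$, and subtracting $(M+w_{e^{\star}})F$ from both sides rearranges to exactly the desired inequality.

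The delicate point, and the one I expect to be the main obstacle, is when $u^{\star}$ is saturated, so the extra flow must be produced by displacing flow from an edge $g=(u^{\star},v')$ with $v'\ne i$ --- this is the branch guarded by line~\ref{if:z_1}, and line~\ref{state:choose_gr} is the choice of $g$. The existence claim I would prove by counting: if every positive-flow edge at $u^{\star}$ had $x_{v'}>\bar{\xi}_{v'}^{-1}(1/|E|)$, each would carry $z_g\le\bar{\xi}_{v'}(x_{v'})<1/|E|$, so the load at $u^{\star}$ would be strictly below $|\delta(u^{\star})|/|E|\le 1\le c_{u^{\star}}$, contradicting saturation; hence some $g$ has $z_g>0$ and $x_{v'}\le\bar{\xi}_{v'}^{-1}(1/|E|)$, whose coefficient satisfies $x_{v'}+w_g\le\ell^{\star}$. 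Displacing $\phi_d\le\bar{\xi}_i(M)-F$ units from $g$ onto $e^{\star}$ changes the objective by $-(x_i-M)F+(M+w_{e^{\star}})(\bar{\xi}_i(M)-F)-(x_{v'}+w_g)\phi_d$; bounding $\phi_d\le\bar{\xi}_i(M)-F$ and invoking condition (i) with $\ell=x_{v'}+w_g$ --- valid because the left-hand side of \eqref{eq: 1st condition} is increasing in $\ell$ and hence negative for all $\ell\le\ell^{\star}$ --- makes the expression telescope to a strictly positive quantity. Feasibility (I) holds since the displacement keeps the load at $u^{\star}$ unchanged, only lowers the load at $v'$, and keeps the load at $i$ at most $\bar{\xi}_i(M)$, while only coordinate $i$ of $\bx$ changes, yielding (III) and (IV). The residual bookkeeping --- iterating the displacement over several low-coefficient edges when a single $z_g$ is too small, with each step beneficial and termination guaranteed because the total displaced mass is at most $1/|E|$ --- is routine, and this is where I expect the finer details of Algorithm~\ref{alg:improve solution} to reside.
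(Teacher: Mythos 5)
Your case decomposition and the objective-value computations (the lower clip, the unsaturated upper clip via condition (i) with $\ell=0$, and the saturated upper clip via condition (i) with positive $\ell$, justified by monotonicity of the left-hand side of \eqref{eq: 1st condition} in $\ell$) all match the paper's proof. But there is a genuine gap at exactly the point you flag as delicate: the existence of an edge $g$ satisfying the condition on line \ref{state:choose_gr}, i.e.\ $\hat{z}_g > 1/|E|$. Your counting argument only yields an edge with \emph{positive} flow and low price $x_{v'} \le \bar{\xi}_{v'}^{-1}(1/|E|)$; its flow can be arbitrarily small, so it need not satisfy the line-\ref{state:choose_gr} condition, and displacing $\bar{\xi}_i(M)-\bar{\xi}_i(x_i)$ units from it could drive its flow negative, breaking feasibility (I). Your fallback --- iterating the displacement over several low-coefficient edges --- is not Algorithm~\ref{alg:improve solution}: the lemma is a statement about the output of that specific algorithm, which performs a single displacement on a single edge $g$, so a proof that silently replaces the algorithm by a multi-edge variant does not prove the lemma as stated.

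The intended argument is a one-line pigeonhole that your detour through prices misses: the condition on line \ref{if:z_1} says $\sum_{e\in\delta(u_{e'})}\hat{z}_e > c_{u_{e'}} \ge 1$, and $|\delta(u_{e'})| \le |E|$, so if every edge at $u_{e'}$ carried at most $1/|E|$ the load would be at most $1$, a contradiction; hence some single edge has $\hat{z}_g > 1/|E|$. This stronger flow bound then does double duty: (a) since the displaced amount satisfies $\bar{\xi}_i(M)-\bar{\xi}_i(x_i) < \bar{\xi}_i(M) \le 1/|E|$ by condition (ii), subtracting it keeps $\hat{z}_g > 0$, giving the non-negativity part of (I); and (b) feasibility at $v(g)$ gives $\bar{\xi}_{v(g)}(x_{v(g)}) \ge z_g > 1/|E|$, hence $x_{v(g)} + w_g \le \max_{v}\bar{\xi}_v^{-1}(1/|E|) + \max_{e}w_e$, which is exactly the price bound your objective comparison needs --- so the low price comes for free once the large flow is established, not the other way around. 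A further point your sketch skips: one must check $g \neq e'$ (indeed $g \notin \delta(i)$, because after line \ref{state:z_to_pM_px} every edge in $\delta(i)$ carries at most $\bar{\xi}_i(M) \le 1/|E|$), which is needed for your claim that the displacement keeps the load at $u_{e'}$ unchanged. Your choice of the maximum-weight edge $e^{\star}$ instead of an arbitrary $e'\in\delta(i)$ is harmless, since condition (i) holds for every edge.
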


\begin{algorithm}[t]
  \caption{$\mathrm{IMPROVE}$}
  \label{alg:improve solution}
  \begin{algorithmic}[1]
    \Require{$\bx \in \mathbb{R}^{V}, \bz \in \mathbb{R}^{E}, i \in V$}
    \If{$x_i < -L$}
        \State{$\hat{x}_{v} \leftarrow 
        \begin{cases}
        -L \ &\text{if}\ v=i,\\
        x_v \ &\text{otherwise,}
        \end{cases}$ \quad \text{for $v \in V$}}\label{state:x_to_mL}
        \State{$\hat{z}_{e} \leftarrow 
        \begin{cases}
        0 \ &\text{if}\ e \in \delta(i), \\
        z_{e} \ & \text{otherwise,}
        \end{cases}$ \quad \text{for $e \in E$}} \label{state:z_to_0}
    \ElsIf{$\mathcal X_i$ is unbounded from above and $x_{i} > M$} \label{con:x>M}
        \State{$\hat{x}_v \leftarrow 
        \begin{cases}
        M \ &\text{if}\ v=i,\\
        x_v \ & \text{otherwise,}
        \end{cases}$  \quad \text{for $v \in V$}} \label{state:x_to_M}
        \State{Select an edge, $e'=(u_{e'},i) \in \delta(i)$} \label{state:select_edge}
        \State{$\hat{z}_{e}\leftarrow
        \begin{cases}
        z_{e} + \bar{\xi}_i(M) - \bar{\xi}_i(x_{i}) \ &\text{if}\ e=e',\\
        z_{e} \ & \text{otherwise,}
        \end{cases}$ \quad \text{for $e \in E$}}\label{state:z_to_pM_px}
         \If{$\tsum_{e \in \delta(u_{e'})} \hat{z}_{e} > c_{u_{e'}}$}\label{if:z_1}
           \State{Select $g \in \delta(u_{e'})$ satisfying $\hat{z}_{g} > \frac{1}{|E|}$}\label{state:choose_gr}
            \State{$\hat{z}_{g} \leftarrow \hat{z}_{g} - \bar{\xi}_i(M) + \bar{\xi}_i(x_{i})$}\label{state:z_gr_to_pM_px}
        \EndIf
    \Else \label{con:-L<x<M}
        \State{$\hat{\bx} \leftarrow \bx$, $\hat{\bz} \leftarrow \bz$} \label{no_change}
    \EndIf
    \State{\textbf{return} $\hat{\bx}, \hat{\bz}$}
  \end{algorithmic}
 \end{algorithm}

\begin{proof}
When Algorithm \ref{alg:improve solution} is applied to a feasible solution $(\bx, \bz)$ for (PA), there exists an edge $g \in E$ satisfying $\hat{z}_g > \frac{1}{|E|}$ in line~\ref{state:choose_gr}. 
If $\hat{z}_e \le \frac{1}{|E|}$ for all $e \in \delta(u_{e'})$, then the condition on line 8 is not satisfied because $\sum_{e \in \delta(u_{e'})} \hat{z}_e \le \sum_{e \in \delta(u_{e'})}\frac{1}{|E|} \le 1 \le c_{u_{e'}}$.

Then, before showing conditions (I)-(III), we present some facts. Since $(\bx,\bz)$ is a feasible solution for (PA), we have
\begin{align}
&\tsum_{e \in \delta(v)} z_{e} \le \bar{\xi}_v(x_v), \quad \forall v \in V, \label{ineq:px_z_feasible}\\
&\tsum_{e \in \delta(u)} z_e \le c_u, \quad \forall u \in U, \label{ineq:u_t_feasible}\\
&0 \le \bz. \label{ineq:px_z_in_0_1}
\end{align}

Here, we consider the case where $x_{i}>M$ and $\mathcal X_i$ is unbounded from above. Then, Algorithm \ref{alg:improve solution} performs lines \ref{state:x_to_M}--\ref{state:z_to_pM_px}. At the end of line \ref{state:z_to_pM_px},
\begin{align*}
\tsum_{e \in \delta(i)} \hat{z}_{e} = \tsum_{e \in \delta(i)} z_{e} + \bar{\xi}_i(M) - \bar{\xi}_i(x_{i}) \le \bar{\xi}_i(x_{i}) +\bar{\xi}_i(M) - \bar{\xi}_i(x_{i}) = \bar{\xi}_i(M) \le \frac{1}{|E|},
\end{align*}
where the first inequality comes from \eqref{ineq:px_z_feasible} and the last inequality comes from \eqref{eq: 2nd condition}. Then, there does not exist $\hat{z}_{e}$ satisfying $\hat{z}_{e} > \frac{1}{|E|}$ for $e \in \delta(i)$. Therefore, when line \ref{state:choose_gr} is executed,
\begin{align} \label{eq:g_set}
g \notin \delta(i), {\rm and \ then,}\ g \neq e',
\end{align}
where $g$ is the chosen edge in line \ref{state:choose_gr} and $e'$ is the chosen edge in line \ref{state:select_edge}.

In the following, we prove that conditions (I), (II), (III), and (IV) hold.

\paragraph{Proof of condition (I):}
At the end of Algorithm \ref{alg:improve solution}, condition (I) is satisfied if the following hold:
\begin{itemize}
\item[(i)] $ \tsum_{e \in \delta(v)} \hat{z}_{e} \le \bar{\xi}_v(\hat{x}_{v})$ for all $v \in V,$
\item[(ii)] $\tsum_{e \in \delta(u)} \hat{z}_{e} \le c_u$ for all $u \in U$, and
\item[(iii)] $0\le \hat{\bz}.$
\end{itemize}
Here, if ``$-L \le x_{i} \le M$ and $\mathcal X_i$ is unbounded from above'' or ``$-L \le x_{i}$ and $\mathcal X_i$ is bounded from above'', then $(\hat{\bx}, \hat{\bz}) = (\bx, \bz)$ from Algorithm \ref{alg:improve solution}. Therefore, condition (I) holds since $(\bx, \bz)$ is a feasible solution for (PA). Then, we show that (i), (ii), and (iii) hold in the following two cases: \textbf{(a)} $x_{i}<-L$; \textbf{(b)} $x_{i}>M$ and $\mathcal X_i$ is unbounded from above.

\subparagraph{(a)} Case where $x_i<-L$:\\
\textbf{(i)} From line~\ref{state:x_to_mL}--\ref{state:z_to_0} of Algorithm \ref{alg:improve solution} and \eqref{ineq:px_z_feasible}, $\bar{\xi}_v(\hat{x}_{v})=\bar{\xi}_v(x_v) \ge \tsum_{e \in \delta(v)} z_e = \tsum_{e \in \delta(v)} \hat{z}_e$ for all $v\in V \setminus \{i\}$ and $\bar{\xi}_i(\hat{x}_i)=\bar{\xi}_i(-L) \ge 0= \tsum_{e \in \delta(i)} \hat{z}_e$. Here, we already assumed that there exists $x' \in \mathcal X_v$ such that $-L\le x'$ for all $v \in V$ without loss of generality in Section \ref{sec:optimization_problem}. Therefore, $-L \in \mathcal X_i$ since $x_i \in \mathcal X_i$, $x' \in \mathcal X_i$, $x_i \le -L \le x'$, and $\mathcal X_i$ is an interval.
\\
\textbf{(ii)} Line~\ref{state:z_to_0} does not increase $z_e$ for all $e \in E$ since $\bz \ge 0$ from \eqref{ineq:px_z_in_0_1}. Therefore, for all $u \in U$,
\begin{align*}
\tsum_{e \in \delta(u)} \hat{z}_e
\le \tsum_{e \in \delta(u)} z_e 
\le c_u.
\end{align*}
Here, the second inequality holds from \eqref{ineq:u_t_feasible}.\\
\textbf{(iii)} From line~\ref{state:z_to_0}, $\hat{z}_e=0$ for all $e \in \delta(i)$. From \eqref{ineq:px_z_in_0_1}, $\hat{z}_e=z_e \ge 0$ for all $e \in E \setminus \delta(i)$. Therefore, $0\le \hat{\bz}$.

\subparagraph{(b)} Case where $x_{i}>M$ and $\mathcal X_i$ is unbounded from above:\\
\textbf{(i)} For $(\hat{\bx},\hat{\bz})$ at the end of line~\ref{state:z_to_pM_px}, the following holds:
\begin{align*}
\bar{\xi}_i(\hat{x}_i)=\bar{\xi}_i(M) \ge \bar{\xi}_i(M) - \bar{\xi}_i(x_i)+\tsum_{e \in \delta(i)} z_{e} 
= \tsum_{e \in \delta(i)} \hat{z}_e,
\end{align*}
where the inequality holds from \eqref{ineq:px_z_feasible}. Moreover, for all $v \in V\setminus \{i\}$, we have that $\bar{\xi}_v(\hat{x}_v) = \bar{\xi}_v(x_v) \ge \tsum_{e \in \delta(v)} z_e = \tsum_{e \in \delta(v)}\hat{z}_e$. Therefore, at the end of line~\ref{state:z_to_pM_px}, $\bar{\xi}_v(\hat{x}_v) \ge \tsum_{e \in \delta(v)} \hat{z}_e$ for all $v \in V$. Line~\ref{state:z_gr_to_pM_px} does not increase $\hat{z}_{g}$ since $- \bar{\xi}_i(M)+ \bar{\xi}_i(x_{i}) \le 0$. Thus, at the end of Algorithm \ref{alg:improve solution}, $\bar{\xi}_v(\hat{x}_{v}) \ge \tsum_{e \in \delta(v)}\hat{z}_{e}$ for all $v \in V$.

\noindent \textbf{(ii)} For all $u \in U \setminus \{u_{e'}\}$, it follows from \eqref{ineq:u_t_feasible} that 
\begin{align*}
 \tsum_{e \in \delta(u)} \hat{z}_e 
= \tsum_{e \in \delta(u)} z_e \le c_u.
\end{align*}
When the condition on line \ref{if:z_1} is not satisfied, $\tsum_{e \in \delta(u_{e'})} \hat{z}_e \le c_{u_{e'}}$. When the condition on line \ref{if:z_1} is satisfied,
\begin{align*}
 \tsum_{e \in \delta(u_{e'})} \hat{z}_e 
= \tsum_{e \in \delta(u_{e'})\setminus \{e',g\}} z_e + z_{e'} + \bar{\xi}_i(M) - \bar{\xi}_i(x_{i}) + z_g - \bar{\xi}_i(M)+ \bar{\xi}_i(x_{i}) = \tsum_{e \in \delta(u_{e'})} z_e 
 \le c_{u_{e'}}.
\end{align*}
Note that $g \neq e'$ from \eqref{eq:g_set}.

\noindent \textbf{(iii)} Since line~\ref{state:z_to_pM_px} only raises $z_{e'}$ by $\bar{\xi}_i(M)-\bar{\xi}_i(x_i)>0$, $\hat{\bz} \ge 0$ from \eqref{ineq:px_z_in_0_1} at the end of line~\ref{state:z_to_pM_px}. Since $\hat{z}_{g} > \frac{1}{|E|}$ at the end of line \ref{state:choose_gr}, at the end of line \ref{state:z_gr_to_pM_px},
\begin{align*}
\hat{z}_{g}>\frac{1}{|E|}- \bar{\xi}_i(M) + \bar{\xi}_i(x_i) > \frac{1}{|E|} -\bar{\xi}_i(M) \ge \frac{1}{|E|} - \frac{1}{|E|} =0.
\end{align*}
The second inequality holds since $\bar{\xi}_i(x_i) > 0$. The third inequality comes from \eqref{eq: 2nd condition}. Therefore, $0\le \hat{\bz}$ at the end of Algorithm \ref{alg:improve solution}.

Consequently, in both cases (a) and (b), the output $(\hat{\bx},\hat{\bz})$ of Algorithm \ref{alg:improve solution} satisfies condition (I).

\paragraph{Proof of condition (II):} if $-L \le x_{i} \le M$ and $\mathcal X_i$ is unbounded from above or $-L \le x_{i}$ and $\mathcal X_i$ is bounded from above, then condition (II) holds since $(\bx,\bz)=(\hat{\bx},\hat{\bz})$ from Algorithm~\ref{alg:improve solution}. Therefore, we will show that condition (II) holds in two cases: \textbf{(a)} $x_i<-L$; \textbf{(b)} $x_i>M$ and $\mathcal X_i$ is unbounded from above.

\textbf{(a)} Case where $x_i<-L$:\\
Line~\ref{state:x_to_mL}--\ref{state:z_to_0} increases the objective value by 
\begin{align*}
- \tsum_{e \in \delta(i)} (w_e + x_i)z_{e} \ge -\tsum_{e \in \delta(i)} (\max_{e \in E} w_e-L)z_{e} = 0.
\end{align*}
Here, the inequality follows from \eqref{ineq:px_z_in_0_1} and $x_i<-L$, and the equality follows from the definition of $L$.

\textbf{(b)} Case where $x_i>M$ and $\mathcal X_i$ is unbounded from above:\\ Suppose that the condition on line~\ref{if:z_1} is not satisfied. Then, lines~\ref{state:x_to_M}-\ref{state:z_to_pM_px} increase the objective value by
\begin{align*}
&\tsum_{e \in \delta(i)\setminus \{e'\}} (w_e+M) z_{e} + (w_{e'}+M) (z_{e'} + \bar{\xi}_i(M)-\bar{\xi}_i(x_i)) -
\tsum_{e \in \delta(i)} (w_e+x_i) z_{e} \\
&= (M-x_i) \tsum_{e \in \delta(i)} z_{e}
+ (w_{e'}+M)(\bar{\xi}_i(M)-\bar{\xi}_i(x_i)) \\
&\ge (M-x_i)\bar{\xi}_i(x_i) + (w_{e'}+M)(\bar{\xi}_i(M)-\bar{\xi}_i(x_i)) \\
& = (M+w_{e'})\bar{\xi}_i(M) - (x_i+w_{e'}) \bar{\xi}_i(x_i) \\
& > 0.
\end{align*}
The first inequality comes from \eqref{ineq:px_z_feasible} and $M-x_i<0$, while the second inequality comes from \eqref{eq: 1st condition}, where $\ell=0$. Therefore, condition (II) is satisfied when the condition on line~\ref{if:z_1} doesn't hold. Next, suppose that the condition on line~\ref{if:z_1} is satisfied. Let $v(g)$ be $v \in V$ incident to $g$. At the end of line~\ref{state:choose_gr}, 
\begin{align}
 \bar{\xi}_{v(g)}(x_{v(g)}) \ge \tsum_{e \in \delta(v(g))} z_{e} \ge z_{g} = \hat{z}_{g} > \frac{1}{|E|}, \label{eq:x_vg_small}
\end{align}
where the first inequality comes from \eqref{ineq:px_z_feasible}, the equality from \eqref{eq:g_set} and the operation of line~\ref{state:z_to_pM_px}, and the last inequality from the condition on line~\ref{state:choose_gr}. Let $\ell:=\max_{v \in V} \bar{\xi}_v^{-1}\left(\frac{1}{|E|}\right) + \max_{e\in E} w_e$. Then,
\begin{align}
\ell = \max_{v \in V} 
\bar{\xi}_v^{-1}\left(\frac{1} {|E|}\right) + \max_{e\in E} w_e \ge \bar{\xi}_{v(g)}^{-1}\left(\frac{1}{|E|}\right) + w_g \ge x_{v(g)} + w_g. \label{ineq:p_vgtrt_ge_x_w}
\end{align}
The second inequality holds from \eqref{eq:x_vg_small} since $\bar{\xi}_v^{-1}$ is monotone decreasing for all $v \in V$ from Assumption \ref{asp:average_rand} (a). Then, lines~\ref{state:x_to_M}-\ref{state:z_to_pM_px} and \ref{if:z_1}-\ref{state:z_gr_to_pM_px} increase the objective value by
\small
\begin{align*}
&\tsum_{e \in \delta(i)\setminus \{e'\}} (w_e+M) z_{e} + (w_{e'}+M) (z_{e'} + \bar{\xi}_i(M)-\bar{\xi}_i(x_i)) -
\tsum_{e \in \delta(i)} (w_e+x_i) z_{e} + (w_{g}+x_{v(g)}) (-\bar{\xi}_i(M)+\bar{\xi}_i(x_i)) \\
&= (M-x_i) \tsum_{e \in \delta(i)} z_e
+ (w_{e'}+M)(\bar{\xi}_i(M)-\bar{\xi}_i(x_i)) - (w_g+x_{v_g}) (\bar{\xi}_i(M)-\bar{\xi}_i(x_i)) \\
&\ge (M-x_i) \tsum_{e \in \delta(i)} z_e
+ (w_{e'}+M)(\bar{\xi}_i(M)-\bar{\xi}_i(x_i)) - \ell (\bar{\xi}_i(M)-\bar{\xi}_i(x_i)) \\
&\ge (M-x_i)\bar{\xi}_i(x_i) + (w_{e'}+M)(\bar{\xi}_i(M)-\bar{\xi}_i(x_i)) - \ell (\bar{\xi}_i(M)-\bar{\xi}_i(x_i)) \\
& = (M+w_{e'} - \ell)\bar{\xi}_i(M) - (x_i+w_{e'}- \ell) \bar{\xi}_i(x_i) \\
& > 0.
\end{align*}
\normalsize
The first inequality holds from $\bar{\xi}_i(M)-\bar{\xi}_i(x_i)>0$ and \eqref{ineq:p_vgtrt_ge_x_w}. The second inequality holds from \eqref{ineq:px_z_feasible} and $M-x_i<0$. The third inequality comes from \eqref{eq: 1st condition}. Therefore, condition (II) is satisfied.

Consequently, in both cases (a) and (b), the output $(\hat{\bx},\hat{\bz})$ of Algorithm \ref{alg:improve solution} satisfies condition (II).

\paragraph{Proof of condition (III):} It is clearly satisfied from line~\ref{state:x_to_mL}, line~\ref{state:x_to_M}, and line~\ref{no_change} in Algorithm~\ref{alg:improve solution}.

\paragraph{Proof of condition (IV):} It is clearly satisfied from line~\ref{state:x_to_mL} and line~\ref{no_change} in Algorithm~\ref{alg:improve solution}.
\end{proof}

Next, we use Lemma \ref{lemma: bounding region} to prove Lemma \ref{lem:opt_exist}.

\begin{proof}
Let $(\hat{\bx}, \hat{\bz})$ be the output of Algorithm~\ref{alg:generate bounded solution} for an arbitrary feasible solution $(\bx, \bz)$ of (PA). Moreover, let us denote
\begin{align}
 V_1 \coloneqq \{v \in V \mid \mathcal X_v \textrm{ is bounded from above} \} \textrm{ and } V_2 \coloneqq \{v \in V \mid \mathcal X_v \textrm{ is unbounded from above} \}. \label{def:V_1_V_2}
\end{align}
Then, from Lemma \ref{lemma: bounding region}, $(\hat{\bx}, \hat{\bz})$ satisfies the following conditions: (I) $(\hat{\bx}, \hat{\bz})$ is a feasible solution of (PA), (II) the objective value of (PA) of $(\hat{\bx}, \hat{\bz})$ is greater than or equal to that of $(\bx, \bz)$, (III) $-L \leq \hx_{v} \leq M$ for all $v \in V_2$, and (IV) $-L \leq \hx_{v}$ for all $v \in V_1$. Therefore, (PA) with the constraints $-L \leq x_v$ for all $v \in V_1$ and $-L \leq x_v \leq M$ for all $v \in V_2$ is equivalent to (PA). Here, the problem with the additional constraints has an optimal solution from the extreme-value theorem \cite[Section 1.6]{royden1988real} since it has a non-empty bounded closed set as the feasible region and a continuous function as the objective function. Therefore, (PA) has an optimal solution.
\end{proof}

\begin{algorithm}[t]
  \caption{Generate bounded solution}
  \label{alg:generate bounded solution}
  \begin{algorithmic}[1]
    \Require{$\bx \in \mathbb{R}^{V}, \bz \in \mathbb{R}^{E}$}
    \State{$\hbx \leftarrow \bx, \hbz \leftarrow \bz$}
    \For{$i \in V$}
        \State{$\hbx, \hbz \leftarrow \mathrm{IMPROVE}(\hbx, \hbz, i)$}
    \EndFor
    \State{\textbf{return} $\hbx, \hbz$}
  \end{algorithmic}
\end{algorithm}

\subsection{Proof of Theorem~\ref{thm:pa_p_solution} \label{prooftheorem2}}
Let $(\hat{\bx},\ \hat{\bz})$ be an optimal solution for (PA). First, we show that $\sum_{e \in \delta (v)} \hat{z}_e \in \mathcal Z^n_v$ for all $v \in V$, where $\mathcal Z^n_v:=\{n_v z \mid z \in \mathcal Z_v \}$. Note that $\mathcal Z^n_v$ is the range of the function $\bar{\xi}_v$ from the definition \eqref{def:bar_xi}. For each ${v} \in V$, we consider two cases according to whether $\sum_{e \in \delta ({v})} \hat{z}_e >0$ or $\sum_{e \in \delta ({v})} \hat{z}_e =0$. 

(i) Case where $\sum_{e \in \delta ({v})} \hat{z}_e >0$. \\ Since $\mathcal Z_{{v}}$ is the image of the connected set $\mathcal X_{{v}}$ by the continuous function $p_{{v}}$, $\mathcal Z_{{v}}$ is a connected set. Then, $\mathcal Z^n_{{v}}$ is a connected set from its definition. From the first constraints of (PA), $\sum_{e \in \delta ({{v}})} \hat{z}_e\le c$, where $c:=\bar{\xi}_{{v}}(\hat{x}_{{v}})$. Then, since $\lim_{x \rightarrow \infty} \bar{\xi}_{{v}}(x) = 0$ or $\bar{\xi}_{{v}}(\max {\mathcal X}_v) = 0$ from \mbox{Assumption~\ref{asp:average_rand}}(b) and the definition \eqref{def:bar_xi} of $\xi_v$, there exists a real number $d \in \mathcal Z^n_{{v}}$ such that $0 \le d < \sum_{e \in \delta ({{v}})} \hat{z}_e$. Since $\mathcal Z^n_{{v}}$ is a connected set and $d < \sum_{e \in \delta ({{v}})} \hat{z}_e\le c$ for $c, d \in \mathcal Z^n_{{v}}$, we get $\sum_{e \in \delta ({{v}})} \hat{z}_e \in \mathcal Z^n_{{v}}$. 

(ii) Case where $\sum_{e \in \delta ({{v}})} \hat{z}_e = 0$. \\ 
We will show that $0 \in \mathcal Z^n_{{v}}$, which yields $\sum_{e \in \delta (v)} \hat{z}_e \in \mathcal Z^n_v$. In particular, we will assume that $0 \notin \mathcal Z^n_{{v}}$ and obtain a contradiction. We pick an arbitrary node ${u}\in U$, which is adjacent to ${{v}}$. Note that we have assumed $\delta(v) \neq \emptyset$ in Section \ref{sec:optimization_problem} without loss of generality. Here, $\sum_{e \in \delta ({u})} \hat{z}_e \le c_{{u}}$ from the constraints of (PA). We will consider two cases. 

{(ii-a)} Case where $\sum_{e \in \delta ({u})} \hat{z}_e <c_{{u}}$. \\ 
Pick a number $x_M \in \{ x \mid x+w_{({u}, {v})}>0, x \in \mathcal X_v\}$. 
Note that we have assumed such $x_M$ exists without loss of generality in Section \ref{sec:optimization_problem}. 
Since $0 \notin \mathcal Z^n_v$, we have $\bar{\xi}_v(x)>0$ for all $x \in \mathcal X_v$. 
Then, there exists $\epsilon$ satisfying $0 <\epsilon \le \bar{\xi}_v(x_M)$ and $\sum_{e \in \delta ({u})} \hat{z}_e +\epsilon \le c_{{u}}$. Replacing $\hat{x}_{{v}}$ with $x_M$ and $\hat{z}_{({u}, {v})}(=0)$ with $\epsilon$ increases the objective value for (PA) because $(x_M+w_{({v},{u})}) \epsilon \ >0$ and this modification does not impair feasibility. This contradicts the optimality of $(\hat{\bx}, \hat{\bz})$ for (PA). 

{(ii-b)} Case where $\sum_{e \in \delta ({u})} \hat{z}_e =c_{{u}}$. \\ 
There exists $\bar{v} \ (\neq {v}) \in V$ satisfying $z_{({u}, \bar{v})}>0$. 
Since $\lim_{x \rightarrow \infty} \bar{\xi}_{{v}}(x)=0$ from $0 \notin \mathcal Z^n_{{v}}$, Assumption~\ref{asp:average_rand}(b), and the definition \eqref{def:bar_xi} of $\bar{\xi}$, there exists $x_M$ satisfying $x_M+w_{({u}, {v}) }>\hat{x}_{\bar{v}}+w_{({u}, \bar{v})}$ and $0<\bar{\xi}_{{v}}(x_M)=\epsilon < z_{({u}, \bar{v})}$. 
Replacing $\hat{x}_{{v}}$ with $x_M$, $\hat{z}_{(u,v)} \ (=0)$ with $\epsilon$, and $\hat{z}_{({u}, \bar{v})}$ with $\hat{z}_{({u}, \bar{v})}-\epsilon$ increases the objective value for (PA) because $(x_M+w_{({u},{v})}) \epsilon - (\hat{x}_{\bar{v}}+w_{({u}, \bar{v} )}) \epsilon >0$. This modification does not impair feasibility. This contradicts the optimality of $(\hat{\bx}, \hat{\bz})$ for (PA). \\ 
From (i) and (ii), we get $\sum_{e \in \delta (v)} \hat{z}_e \in \mathcal Z^n_v$ for all $v \in V$.

Next, we show that there exists an optimal solution $(\tilde{\bx},\ \tilde{\bz})$ for (PA) that satisfies $\bar{\xi}_v(\tilde{x}_{v}) = \sum\nolimits_{e \in \delta(v)} \tilde{z}_e$ for all $v$. Let $(\hat{\bx}, \hat{\bz})$ be an optimal solution for (PA). Then, from the constraints of (PA), $\sum\nolimits_{e \in \delta(v)} \hat{z}_{e} \le \bar{\xi}_v(\hat{x}_v)$ for all $v \in V$. Suppose that there exists $v$ satisfying $\sum\nolimits_{e \in \delta(v)} \hat{z}_{e} < \bar{\xi}_v(\hat{x}_v)$ and let $\hat{V}:=\{ v \mid \sum\nolimits_{e \in \delta(v)} \hat{z}_e < \bar{\xi}_v(\hat{x}_v) \}$. Since $\bar{\xi}_v$ is strictly decreasing and $\sum_{e \in \delta (v)} \hat{z}_e \in \mathcal Z^n_v$, there exists a positive scalar $d_v$ that satisfies $\sum\nolimits_{e\in \delta(v)} \hat{z}_e = \bar{\xi}_v(\hat{x}_v+d_v)$ for all $v \in \hat{V}$. Let $\tilde{x}_v$ be $\hat{x}_v+d_v$ for all $v \in \hat{V}$, $\tilde{x}_v$ be $\hat{x}_v$ for all $v \notin \hat{V}$, and $\tilde{\bz}$ be $\hat{\bz}$. Then, $(\tilde{\bx},\ \tilde{\bz})$ is an optimal solution for (PA) because it is a feasible solution and the objective value is greater than or equal to the optimal value from $\hat{\bz} \ge 0$. Therefore, in (PA), there exists an optimal solution $(\tilde{\bx},\tilde{\bz})$ satisfying $\sum\nolimits_{e\in \delta(v)} \tilde{z}_e = \bar{\xi}_v(\tilde{x}_v)$ for all $v \in V$. Hence, (PA) with the constraint, $\sum\nolimits_{e \in \delta(v)} z_e = \bar{\xi}_v(x_v)$ for all $v \in V$, is equivalent to (PA). Since $\bar{\xi}_v$ is a bijective function from Assumption~\ref{asp:average_rand}(a), $x_v=\bar{\xi}_v^{-1} \big( \sum\nolimits_{e \in \delta(v)} z_e \big)$ when $\sum\nolimits_{e \in \delta(v)} z_e = \bar{\xi}_v(x_v)$. By substituting $x_v=\bar{\xi}_v^{-1} \big( \sum\nolimits_{e \in \delta(v)} z_e \big)$ for (PA), we obtain ${\rm (PA')}$. Therefore, when $\bz^*$ is an optimal solution for ${\rm (PA')}$ and $x^*_v=\bar{\xi}_v^{-1}(\sum\nolimits_{e \in \delta(v)} z^*_e)$ for all $v \in V$, $(\bx^*,\bz^*)$ is an optimal solution for (PA). $\Box$

\bibliographystyle{abbrvnat}
\bibliography{reference}

\end{document}